\def\beq{\begin{equation}}
\def\eeq{\end{equation}}
\def\ba{\begin{array}}
\def\ea{\end{array}}
\numberwithin{equation}{section}
\newenvironment{abs}{\textbf{Abstract}\mbox{  }}{ }
\newenvironment{key words}{\textbf{Keywords}\mbox{  }}{ }
\newtheorem{theorem}{Theorem}[section]
\newtheorem{corollary}[theorem]{\textbf{Corollary}}
\newtheorem{proposition}[theorem]{\textbf{Proposition}}
\newtheorem{lemma}[theorem]{Lemma}
\renewenvironment{proof}{\noindent{\textbf{Proof.}}}{\hfill$\Box$}
\theoremstyle{remark}
\newtheorem{remark}[theorem]{\textbf{Remark}}
\theoremstyle{plain}
\begin{document}
\title{\textbf{Negative power nonlinear integral equations on bounded domains}}
\author[J. Dou, Q. Guo and M. Zhu]{{\it Dedicate to Ha\"im Brezis on occasion of his 75th birthday }
\\
\\Jingbo Dou, Qianqiao Guo and Meijun Zhu}
\address{Jingbo Dou, School of Mathematics and Information Science, Shaanxi Normal University, Xi'an, Shaanxi, 710119, China}

\email{jbdou@snnu.edu.cn}

\address{Qianqiao Guo, Department of Applied Mathematics, Northwestern Polytechnical University, Xi'an, Shaanxi, 710129, China}

\email{gqianqiao@nwpu.edu.cn}

\address{Meijun Zhu, Department of Mathematics,
The University of Oklahoma, Norman, OK 73019, USA}

\email{mzhu@math.ou.edu}


\maketitle

\noindent
\begin{abs}
This is the  continuation of our previous work \cite{DZ2017}, where we introduced and studied some  nonlinear integral equations on bounded domains that are related to the sharp Hardy-Littlewood-Sobolev  inequality. In this paper, we introduce some nonlinear integral equations on bounded domains that are related to the sharp reversed Hardy-Littlewood-Sobolev inequality. These are integral equations with nonlinear term involving negative exponents.  Existence results as well as nonexistence results are obtained.
\end{abs}\\
\begin{key words} Reversed sharp Hardy-Littlewood-Sobolev inequality, Integral equations, Existence, nonexistence
\end{key words}\\
\textbf{Mathematics Subject Classification(2000).}
45G10, 35J60 \indent
\section{\textbf{Introduction}\label{Section 1}}

In \cite{DZ2017}, motivated by the study of certain semi-linear equations and the sharp Sobolev inequality, we introduced and  studied the integral equations (with positive power) related to the sharp Hardy-Littlewood-Sobolev (HLS for short) inequality. Let us briefly recall these as the follows.


For $0<\alpha<n$,  on any bounded domain $\Omega \subset \mathbb{R}^n$ with smooth boundary, we considered
\begin{eqnarray*}
\hat{\xi}_\alpha(\Omega)=\sup_{f\in L^\frac{2n}{n+\alpha}(\Omega),f\neq 0}\frac{\int_{\Omega} \int_{\Omega} f(x)|x-y|^{-(n-\alpha)} f(y) dx dy}{||f||^2_{L^\frac{2n}{n+\alpha}(\Omega)}}.
\end{eqnarray*}
It was showed in \cite{DZ2017} that $
\hat{\xi}_\alpha (\Omega)= N_\alpha,$  where $N_\alpha$ is the best constant of the classical sharp HLS inequality (due to Lieb \cite{Lieb1983});
  And $\hat{\xi}_\alpha (\Omega) $ is not attained by any functions if $\Omega \ne \mathbb{R}^n$. This indicates that  there is not any energy maximizing  solution to
 $$
f^\frac{n-\alpha}{n+\alpha}(x)=\int_\Omega\frac{f(y)}{|x-y|^{n-\alpha}}dy,\quad f\ge 0, \quad x\in \overline \Omega.
$$
We then considered a general integral equation
\begin{equation}\label{HB}
f^{q-1}(x)=\int_\Omega\frac{f(y)}{|x-y|^{n-\alpha}}dy+\lambda \int_\Omega\frac {f(y)}{|x-y|^{n-\alpha-1}}dy,\quad f\ge 0, \quad x\in \overline \Omega,
\end{equation}
for $\alpha<n$, and studied  the existence and nonexistence of positive solutions for different power $q$ and parameter $\lambda$.

In this paper we consider integral equation \eqref{HB}  for $\alpha>n$.
This case is related to so called sharp reversed HLS inequality, which was discovered by Dou and Zhu \cite{DZ2015}.

%
%


Recall the sharp reversed  HLS inequality  from \cite{DZ2015} (see also related work by Ng\^{o}, Nguyen \cite{NN2017} and by Beckner \cite{Beck2015}):

\smallskip
\noindent  \textbf{Theorem A.} {\it  For $\alpha>n$,
\begin{equation}\label{R-HLS}
|\int_{\mathbb{R}^n} \int_{\mathbb{R}^n} f(x)|x-y|^{-(n-\alpha)} g(y) dx dy|\ge N_\alpha ||f||_{L^{\frac{2n}{n+\alpha}}(\mathbb{R}^n)}||g||_{L^{\frac {2n}{n+\alpha}}(\mathbb{R}^n)}
\end{equation}
holds for all non-negative functions $f, \ g \in L^{\frac{2n}{n+\alpha}}(\mathbb{R}^n),$
where
\begin{equation}\label{R-extreCon}
N_\alpha=N(\frac{2n}{n+\alpha},\alpha, n)=\pi^{(n-\alpha) /2} \frac{\Gamma(\alpha/2)}{\Gamma(n/2+\alpha/2)} \{\frac{\Gamma(n/2)}{\Gamma(n)} \}^{-\alpha/n};
\end{equation}
And the equality holds if and only if
\begin{equation}\label{R-HLS-ex}
f=c_1g=c_2\big(\frac{1}{c_3+|x-x_0|^2}\big)^{\frac {n+\alpha}2},
\end{equation}
where $c_1, c_2, c_3$ are any positive constants, and $x_0 \in \mathbb{R}^n$.

}\smallskip

\smallskip

Similar to what is done in \cite{DZ2017}: for any smooth domain $\Omega \subset \mathbb{R}^n$,  we consider
\begin{eqnarray*}
\xi_\alpha(\Omega)=\inf_{f\in L^\frac{2n}{n+\alpha}(\Omega),f\ge 0, f \ne 0}\frac{\int_{\Omega} \int_{\Omega} f(x)|x-y|^{-(n-\alpha)} f(y) dx dy}{||f||^2_{L^\frac{2n}{n+\alpha}(\Omega)}}.
\end{eqnarray*}
We will  show that
\begin{eqnarray}\label{R-energy}
 \xi_\alpha (\Omega)= N_\alpha,
  \end{eqnarray}
  and $\xi_\alpha (\Omega) $ is not attained by any functions if $\Omega \ne \mathbb{R}^n$ (see Proposition 2.1 below).
Again, we notice that the corresponding Euler-Lagrange equation for the minimizer (if the minimum is attained) is  integral equation \eqref{HB}  with $\lambda=0$ and a negative power ($\frac{n-\alpha}{n+\alpha}<0$).
We thus know that if $\alpha>n$ there is no energy minimizing solution to  integral equation \eqref{HB} for $q=2n/(n+\alpha)$ and $\lambda=0$.


 \medskip

 Let $p_\alpha=2n/(n-\alpha)$, $q_\alpha=2n/(n+\alpha)$ and $d(\Omega)=\sup\limits_{x,y\in\Omega}|x-y|$ be the diameter of the bounded domain $\Omega$.
In this paper, we consider  integral equation \eqref{HB} for $\alpha>n$. We shall prove
\begin{theorem}\label{main}
Assume $\alpha>n$ and $\Omega$ is a bounded domain with smooth boundary.

\noindent (1)  For $0<q<q_\alpha$(subcritical case), and  $-\frac{1}{d(\Omega)}<\lambda$, there is a positive solution $f\in C^1(\overline{\Omega})$ to equation \eqref{HB}.

\noindent(2) For $q=q_\alpha$ (critical case), and $-\frac{1}{d(\Omega)}< \lambda<0$, there is a positive solution $f\in C^1(\overline{\Omega})$ to equation \eqref{HB}.

\noindent(3) For $q_\alpha\le q<1$ (critical and supercritical cases), and   $\lambda\ge 0$, if  $\Omega$ is a star-shaped domain, then  there is not any positive $C^1(\overline{\Omega})$ solution to \eqref{HB}.

\end{theorem}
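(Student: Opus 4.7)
My plan is to handle the three parts with distinct techniques: the existence statements (1) and (2) by the direct method of the calculus of variations applied to the natural energy associated with \eqref{HB}, and the nonexistence (3) by a Pohozaev-type integral identity on star-shaped domains. Throughout write $K_\lambda(x,y) = |x-y|^{\alpha-n} + \lambda|x-y|^{\alpha-n-1} = K_1 + \lambda K_2$, and consider the constrained infimum
\[
I_\lambda(q) \;=\; \inf\left\{\iint_{\Omega\times\Omega} K_\lambda(x,y) f(x) f(y)\,dx\,dy \;:\; f \ge 0,\ \int_\Omega f^q\,dx = 1\right\}.
\]
The reversed HLS inequality (Theorem A) together with the bounded-domain inclusion $\|f\|_{L^q} \le |\Omega|^{1/q - 1/q_\alpha}\|f\|_{L^{q_\alpha}}$ (valid when $q \le q_\alpha$) gives $\iint K_1 ff \ge c\|f\|_{L^q}^2$ for some $c>0$, and the pointwise bound $K_2 \ge K_1/d(\Omega)$ (from $|x-y| \le d(\Omega)$) combined with $\lambda > -1/d(\Omega)$ controls the perturbation, so that $I_\lambda(q) > 0$.

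For part (1), subcritical $q < q_\alpha$, the embedding $L^{q_\alpha}(\Omega) \hookrightarrow L^q(\Omega)$ is compact, and a minimising sequence admits a subsequence converging strongly in $L^q$ (so the normalisation passes to the limit) and weakly in $L^{q_\alpha}$; the bounded continuous kernel allows the energy to pass to the limit. The Euler-Lagrange equation $(f^*)^{q-1} = \mu\int K_\lambda(\cdot,y) f^*(y)\,dy$ for the minimiser, followed by a rescaling $f = c f^*$ with $c^{q-2} = \mu$, produces a solution of \eqref{HB}. Continuity of $K_\lambda$ on $\overline\Omega\times\overline\Omega$ gives $f^{q-1} \in C^0(\overline\Omega)$ with $f$ bounded below away from zero, and a bootstrap upgrades the regularity to $f \in C^1(\overline\Omega)$. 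For part (2), critical $q = q_\alpha$ with $\lambda < 0$, the above compactness is lost, and I would turn to a Brezis-Nirenberg-type argument: establish the strict inequality $I_\lambda(q_\alpha) < N_\alpha$ by testing the energy with truncated rescalings of the reversed-HLS extremals $(c_3 + |x-x_0|^2)^{-(n+\alpha)/2}$ from \eqref{R-HLS-ex} (the $K_1$ piece contributes $N_\alpha + o(1)$, while $\lambda < 0$ forces the $K_2$ piece to contribute a strictly negative correction), after which a concentration-compactness argument rules out bubbling and recovers a minimiser, from which the solution is obtained as before.

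For part (3), multiplying \eqref{HB} by $x\cdot\nabla f(x)$, integrating over $\Omega$, integrating by parts in $x$, using the symmetry $K(x,y) = K(y,x)$, the Euler relation $x\cdot\nabla_x K + y\cdot\nabla_y K = (\deg K) K$ (with degrees $\alpha-n$ and $\alpha-n-1$ for $K_1$ and $K_2$), and using the equation itself on $\partial\Omega$ to rewrite the emerging double boundary integral as $\int_{\partial\Omega} f^q(x\cdot\nu)\,d\sigma$, I arrive at the identity
\[
\left(\frac{1}{q}-1\right)\int_{\partial\Omega} f^q(x\cdot\nu)\,d\sigma \;=\; \left(\frac{n}{q}-\frac{n+\alpha}{2}\right)\iint K_1 ff \;+\; \lambda\left(\frac{n}{q}-\frac{n+\alpha-1}{2}\right)\iint K_2 ff.
\]
For strictly star-shaped $\Omega$ and $f>0$ on $\overline\Omega$, the left side is strictly positive; for $q \ge q_\alpha$ the coefficient of $\iint K_1 ff$ on the right is non-positive. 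The main obstacle is that for $\lambda > 0$ the coefficient $\frac{n}{q} - \frac{n+\alpha-1}{2}$ of the $K_2$ term is positive just above criticality, so naive sign comparison does not close the argument. I would overcome this by using the equation-derived relation $\int_\Omega f^q = \iint K_1 ff + \lambda\iint K_2 ff$ to eliminate $\iint K_2 ff$ from the identity, and combining the resulting cleaner form with the strict reversed HLS inequality $\iint K_1 ff > N_\alpha\|f\|_{L^{q_\alpha}}^2$ (strict since $\Omega \ne \mathbb{R}^n$) and the comparison $K_2 \ge K_1/d(\Omega)$ to force the required contradiction for the full range $\lambda \ge 0$.
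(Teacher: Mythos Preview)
Your proposal has two genuine gaps.

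\textbf{Part (1).} The claim that ``the embedding $L^{q_\alpha}(\Omega)\hookrightarrow L^q(\Omega)$ is compact'' is false: on a bounded domain the inclusion $L^p\hookrightarrow L^r$ for $p>r$ is never compact (take e.g.\ $f_j=\mathrm{sign}(\sin(2\pi j x_1))$). More fundamentally, since $\alpha>n$ we have $q_\alpha=\frac{2n}{n+\alpha}<1$, so $L^{q_\alpha}(\Omega)$ is only a quasi-Banach space with trivial dual, and ``weak convergence in $L^{q_\alpha}$'' has no useful content. The paper is explicit about this difficulty (``no compact embedding can be used directly here'') and replaces your argument by a considerably more delicate one: normalise $\|f_j\|_{L^{q_\alpha}}=1$, pass to weak convergence of $f_j^{\,q}$ in the genuine Banach space $L^{q_\alpha/q}(\Omega)$, prove the nontrivial uniform bound $\|f_j\|_{L^1(\Omega)}\le C$ (via a measure-theoretic argument on the level sets of $I_{\alpha,\Omega}f_j$), and then use equicontinuity of the smoothed integrals $\int_\Omega f_j^{\,q}(y)f_*^{1-q}(y)|x-y|^{\alpha-n}(1+\lambda|x-y|)\,dy$ to recover lower semicontinuity of the energy. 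Your sketch does not supply any of this.

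\textbf{Part (3).} You miscompute the homogeneity of $K_2$: since $K_2(x,y)=|x-y|^{-(n-\alpha-1)}=|x-y|^{\alpha-n+1}$, its degree is $\alpha-n+1$, not $\alpha-n-1$. Redoing your derivation with the correct degree gives the $K_2$-coefficient $\frac{n}{q}-\frac{n+\alpha+1}{2}$, which for $q\ge q_\alpha$ is $\le -\tfrac12<0$. Thus for $\lambda\ge 0$ the right-hand side of your identity is $\le 0$ while the left-hand side is $>0$, and the contradiction closes immediately --- exactly as in the paper, which records the equivalent identity (Lemma~\ref{poh}) in the variable $u=f^{q-1}$. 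The ``main obstacle'' you describe, and the proposed workaround via the strict reversed HLS inequality and the comparison $K_2\ge K_1/d(\Omega)$, are artifacts of the sign error and are neither needed nor clearly viable.

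\textbf{Part (2).} Your Brezis--Nirenberg outline (strict energy inequality plus concentration--compactness) is a different route from the paper, which instead uses subcritical approximation: solve \eqref{sub-equ2} for $q<q_\alpha$, prove uniform two-sided bounds on $f_q$ by a blow-up analysis (rescaling to an equation on $\Bbb R^n$ or a half-space and contradicting $Q_\lambda<N_\alpha$), and pass to the limit $q\to(q_\alpha)^-$. Your approach could in principle be made to work, but concentration--compactness in $L^{q_\alpha}$ with $q_\alpha<1$ is non-standard for the same reasons as in Part~(1), and you would need to say what replaces weak convergence and how the profile decomposition is set up.
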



%
%

 We emphasis here that $q_\alpha<1$ (since $\alpha>n$). Thus equation \eqref{HB} has a nonlinear term with a negative power. Our results indicate that even though  the integral equation, which is related to the reversed HLS inequality, is of negative nonlinearity, similar phenomena to the integral equation  with positive power can be seen. Contrary to integral equations with positive nonlinearity, it seems that no compact embedding can be used directly for the existence result to the integral equation  with subcritical negative power. Different techniques are needed for deriving the existence as well as the nonexistence results. See more details in Section 3 below.

 We organize the paper as follows: In Section 2, we focus on the nonexistence result (part (3) of Theorem \ref{main}).  In Section 3, we first obtain the existence result  (part (1) of Theorem \ref{main}), and then we show the symmetric and monotonically increasing properties of solutions to the integral equation on a ball, even no pointwise boundary condition is given (Theorem \ref{UB-symmetry} below). In Section 4, we come back to the integral equations with critical exponent and a lower order term and prove the existence result (part (2) of Theorem \ref{main}).


 Notation: for any function $f(x)$ defined on $\Omega$, we always use $\tilde f(x)$ to represent its trivial extension in $\Bbb{R}^n$, namely,
\begin{eqnarray*}
 \tilde{f}(x):= \begin{cases}f(x)&\quad x\in\Omega,\\
 0&\quad x\in\mathbb{R}^n\backslash\Omega.
  \end{cases}
  \end{eqnarray*}
 We also denote
  $$
  I_\alpha f(x):=\int_{\Bbb{R}^n}\frac {f(y)}{|x-y|^{n-\alpha}} dy, \ \ \ \ I_{\alpha, \Omega} f(x):=\int_{\Omega}\frac {f(y)}{|x-y|^{n-\alpha}} dy
  $$
and
 $$  L_+^q(\Omega):=\{ f \in   L^q(\Omega)\setminus\{0\}  \ : \ f \ge 0\}.$$

%
%
%
%

\section{nonexistence for critical and supercritical cases}

In this section, we first derive energy estimate \eqref{R-energy} for any domain $\Omega \subset \Bbb{R}^n$ , and then show that the  infimum $\xi_\alpha(\Omega) $ is not achieved by any function once $\Omega \ne \Bbb{R}^n$.

\begin{proposition}\label{prop2-1} For any domain $\Omega \subset \Bbb{R}^n$,
$\xi_\alpha(\Omega)=N_\alpha$. Further, if  $\Omega \ne \Bbb{R}^n$, then the infimum $\xi_\alpha(\Omega)$  is not achieved by any function in $L^{q_\alpha}(\Omega)$.
\end{proposition}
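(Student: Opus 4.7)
The plan is a three-step argument. First I would establish the lower bound $\xi_\alpha(\Omega)\geq N_\alpha$ by trivial extension, next produce a family of test functions whose Rayleigh quotient tends to $N_\alpha$, and finally rule out minimizers by invoking the rigidity clause of Theorem A. The lower bound is immediate: for any $f\in L^{q_\alpha}_+(\Omega)$, its zero extension $\tilde f$ satisfies $\|\tilde f\|_{L^{q_\alpha}(\mathbb{R}^n)}=\|f\|_{L^{q_\alpha}(\Omega)}$ and has the same double integral as $f$, so the reversed HLS inequality \eqref{R-HLS} applied to $\tilde f$ on $\mathbb{R}^n$ gives
$$
\int_\Omega\int_\Omega f(x)|x-y|^{-(n-\alpha)}f(y)\,dx\,dy\geq N_\alpha\,\|f\|_{L^{q_\alpha}(\Omega)}^2.
$$

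For the matching upper bound, I would test against truncated, concentrating extremals of the free problem. Fix any $x_0\in\Omega$ and $r>0$ with $\overline{B_r(x_0)}\subset\Omega$, and for small $\lambda>0$ set $f_\lambda(x)=(\lambda^2+|x-x_0|^2)^{-(n+\alpha)/2}$ for $x\in B_r(x_0)$ and $f_\lambda(x)=0$ otherwise. Substituting $x=x_0+\lambda u$, $y=x_0+\lambda v$ and writing $F(u)=(1+|u|^2)^{-(n+\alpha)/2}$, both the double integral and the $L^{q_\alpha}$-norm in the Rayleigh quotient acquire a common factor of $\lambda^{-(n+\alpha)}$ that cancels exactly, and the quotient reduces to
$$
\frac{\int_{|u|<r/\lambda}\int_{|v|<r/\lambda}F(u)F(v)|u-v|^{\alpha-n}\,du\,dv}{\bigl(\int_{|u|<r/\lambda}F(u)^{q_\alpha}\,du\bigr)^{2/q_\alpha}}.
$$
As $\lambda\to 0$ both integrals converge to their counterparts over all of $\mathbb{R}^n$, and since $F$ saturates \eqref{R-HLS} by Theorem A the ratio tends to $N_\alpha$.

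The main obstacle in the upper bound is justifying this passage to the limit: because $\alpha>n$, the kernel $|u-v|^{\alpha-n}$ actually grows at infinity. The saving comes from the decay $F(u)\leq C(1+|u|)^{-(n+\alpha)}$ together with $|u-v|^{\alpha-n}\leq 2^{\alpha-n}(|u|+|v|)^{\alpha-n}$; bounding $|u|+|v|\leq 2\max(|u|,|v|)$ and separating the case $|u|\geq|v|$ shows the integrand is dominated by $C(1+|u|)^{-2n}(1+|v|)^{-(n+\alpha)}$, which is integrable on $\mathbb{R}^n\times\mathbb{R}^n$. Dominated convergence then gives the claimed limit, and a similar (easier) argument handles the norm.

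For non-attainment, suppose some $f_0\in L^{q_\alpha}_+(\Omega)$ realizes $\xi_\alpha(\Omega)=N_\alpha$. Then $\tilde f_0$ saturates \eqref{R-HLS} on $\mathbb{R}^n$, so by the rigidity clause of Theorem A one has $\tilde f_0(x)=c_2(c_3+|x-x_1|^2)^{-(n+\alpha)/2}$ almost everywhere for some positive constants $c_2,c_3$ and some $x_1\in\mathbb{R}^n$. This function is strictly positive on all of $\mathbb{R}^n$, contradicting $\tilde f_0\equiv 0$ on the positive-measure set $\mathbb{R}^n\setminus\Omega$.
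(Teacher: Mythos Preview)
Your proof is correct, and its overall architecture---lower bound by extension, upper bound by concentrating extremals, non-attainment by rigidity---matches the paper's. The one place you genuinely diverge is in the mechanics of the upper-bound computation.

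The paper does not rescale. Instead it writes the double integral of the truncated extremal $g=\tilde f_\epsilon$ over $\Omega\times\Omega$ as the full integral over $\mathbb{R}^n\times\mathbb{R}^n$ minus correction terms supported in $\mathbb{R}^n\setminus B_R(x_0)$. The key correction $I_1=\int_{\mathbb{R}^n}\int_{\mathbb{R}^n\setminus B_R}f_\epsilon(x)f_\epsilon(y)|x-y|^{\alpha-n}\,dx\,dy$ is evaluated by observing that $f_\epsilon$ satisfies the Euler--Lagrange equation $B\,I_\alpha f_\epsilon=f_\epsilon^{(n-\alpha)/(n+\alpha)}$, which collapses $I_1$ to $C\int_{\mathbb{R}^n\setminus B_R}f_\epsilon^{q_\alpha}=O((\epsilon/R)^n)$. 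The same tail estimate handles the denominator, and the ratio is bounded by $\dfrac{N_\alpha\|f_\epsilon\|^2-O((\epsilon/R)^n)}{\|f_\epsilon\|^2-O((\epsilon/R)^n)}\to N_\alpha$.

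Your rescaling argument is cleaner and more self-contained: it avoids invoking the Euler--Lagrange equation and needs only a convergence theorem (in fact monotone convergence already suffices, since the integrand is nonnegative and the domain $B_{r/\lambda}\times B_{r/\lambda}$ is increasing---your dominated-convergence bound is correct but more than necessary). What the paper's approach buys is an explicit error rate $O((\epsilon/R)^n)$; this is irrelevant for Proposition~\ref{prop2-1} itself, but the same computation is recycled in Lemma~\ref{Lm-inequ}, where the $O(\epsilon^n)$ tail must be compared against an $O(\epsilon)$ contribution from the lower-order kernel to conclude $Q_\lambda(\Omega)<N_\alpha$ strictly.
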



\begin{proof}
If $f\in L_+^{q_\alpha}(\Omega)$, then $\tilde{f}\in L_+^{q_\alpha}(\mathbb{R}^n)$ . It follows that
\begin{eqnarray*}
\xi_\alpha(\Omega)&=&\inf_{{f}\in L_+^{q_\alpha}(\Omega)\setminus\{0\}}\frac{\int_{\mathbb{R}^n} \int_{\mathbb{R}^n} \tilde{f}(x)|x-y|^{-(n-\alpha)} \tilde{f}(y) dx dy}{||\tilde{f}||^2_{L^{q_\alpha}(\mathbb{R}^n)}}\\
&\ge & \inf_{{g}\in L_+^{q_\alpha}(\Bbb{R}^n)\setminus \{0\}}\frac{\int_{\mathbb{R}^n} \int_{\mathbb{R}^n} {g}(x)|x-y|^{-(n-\alpha)} {g}(y) dx dy}{||{g}||^2_{L^{q_\alpha}(\mathbb{R}^n)}} =N_\alpha.
\end{eqnarray*}

On the other hand,  recall  that $f(x)=\big(
\frac{1}{1+|x|^2}\big)^\frac{n+\alpha}2$ is an extremal function to the sharp reversed HLS inequality in {Theorem A}, as well as its conformal equivalent class:
\begin{equation}\label{fe}
f_\epsilon(x)=\epsilon^{-\frac{n+\alpha}2}f(\frac{|x-x_*|}\epsilon)=\big(\frac{\epsilon}{\epsilon^2+|x-x_*|^2}\big)^\frac{n+\alpha}2,
\end{equation}
where $x_*\in\mathbb{R}^n,\epsilon>0$. Thus
\[
\|I_\alpha f\|_{L^{p_\alpha}(\mathbb{R}^n)}=\|I_\alpha f_\epsilon\|_{L^{p_\alpha}(\mathbb{R}^n)},
\quad\|f\|_{L^{q_\alpha}(\mathbb{R}^n)}=\|f_\epsilon\|_{L^{q_\alpha}(\mathbb{R}^n)}.
\]
 Choose $x_0=x_*$ for some point $x_0\in \Omega$ and $R$ small enough so that $B_{R}(x_0)\subset\Omega$. Then we define test function $g (x)$ as
 \begin{eqnarray*}
g(x)= \begin{cases}f_\epsilon(x) &\quad x\in B_{R}(x_0)\subset\Omega,\\
 0&\quad x\in\mathbb{R}^n\backslash B_{R}(x_0).
  \end{cases}
  \end{eqnarray*}
Obviously, $g\in L_+^{q_\alpha}(\Bbb{R}^n).$ Thus,
\begin{eqnarray*}
& &\int_{\Omega} \int_{\Omega} \frac{g(x)g(y)}{|x-y|^{n-\alpha}} dx dy\\
&=&\int_{\mathbb{R}^n} \int_{\mathbb{R}^n} \frac{ f_\epsilon(x) f_\epsilon(y)}{|x-y|^{n-\alpha}} dx dy
-2\int_{\mathbb{R}^n} \int_{\mathbb{R}^n\backslash B_{R}(x_0) } \frac{ f_\epsilon(x) f_\epsilon(y)}{|x-y|^{n-\alpha}} dx dy\\
& &+\int_{\mathbb{R}^n\backslash B_{R}(x_0) } \int_{\mathbb{R}^n\backslash B_{R}(x_0) } \frac{ f_\epsilon(x) f_\epsilon(y)}{|x-y|^{n-\alpha}} dx dy\\
&\le&N_\alpha\|f_\epsilon\|^2_{L^{q_\alpha}(\mathbb{R}^n)}-2I_1 ,
\end{eqnarray*}
where
\begin{eqnarray*}
I_1&:=& \int_{\mathbb{R}^n} \int_{\mathbb{R}^n\backslash B_{R}(x_0) } \frac{ f_\epsilon(x) f_\epsilon(y)}{|x-y|^{n-\alpha}} dx dy.
\end{eqnarray*}
Notice that $f_\epsilon(x)$ is an extremal function for the sharp reversed HLS inequality. Thus it satisfies  integral equation:
\begin{equation*}
f^\frac{n-\alpha}{n+\alpha}(x)=B\int_{\mathbb{R}^n}\frac{f(y)}{|x-y|^{n-\alpha}}dy,
\end{equation*} where $B$ 
is a suitable positive constant.
We thus can estimate $I_1$ in the following:
\begin{eqnarray*}
I_1=
C\int_{\mathbb{R}^n\backslash B_{R}(x_0) } f_\epsilon^{\frac{2n}{n+\alpha}}(x) dx
=O(\frac{R}\epsilon)^{-n},  \quad\quad\text{as}\quad\epsilon\to0.
\end{eqnarray*}
And we also have
\begin{eqnarray*}
\int_{B_{R}(x_0) } f_\epsilon^{\frac{2n}{n+\alpha}}(x) dx&=&\int_{\mathbb{R}^n} f_\epsilon^{\frac{2n}{n+\alpha}}(x) dx-\int_{\mathbb{R}^n\backslash B_{R}(x_0) } f_\epsilon^{\frac{2n}{n+\alpha}}(x) dx\\
&=&\int_{\mathbb{R}^n} f_\epsilon^{\frac{2n}{n+\alpha}}(x) dx-O(\frac{R}\epsilon)^{-n},  \quad\quad\text{as}\quad\epsilon\to0.
\end{eqnarray*}
Hence, for small enough $\epsilon>0$, we have
 \begin{eqnarray*}
 \xi_\alpha(\Omega)&\le &\frac{\int_{\Omega} \int_{\Omega} \frac{g(x)g(y)}{|x-y|^{n-\alpha}} dx dy}{\|g\|^2_{L^{q_\alpha}(\Omega)}}\\
 &\le &\frac{N_\alpha\|f_\epsilon\|^2_{L^{q_\alpha}(\mathbb{R}^n)}-I_1}{\|f_\epsilon\|^2_{L^{q_\alpha}(B_{R}(x_0))}}\\
 &=&\frac{N_\alpha\|f_\epsilon\|^2_{L^{q_\alpha}(\mathbb{R}^n)}-O(\frac{R}\epsilon)^{-n}}{\|f_\epsilon\|^2_{L^{q_\alpha}(\mathbb{R}^n)}-O(\frac{R}\epsilon)^{-n}} ,
  \end{eqnarray*}
which yields $\xi_\alpha(\Omega)\le N_\alpha$ as $\epsilon \to 0$.

Finally,  we show  that $\xi_\alpha(\Omega)$ is not achieved if $\Omega\neq\mathbb{R}^n$.
In fact, if $\xi_\alpha(\Omega)$ were attained by some function $u\in L_+^{q_\alpha}(\Omega)$, then $\tilde u\in L_+^{q_\alpha}(\Bbb{R}^n)$ would be an extremal function to the sharp reversed HLS inequality on $\Bbb{R}^n$, which is impossible due to Theorem A.
%
\end{proof}

\medskip

Proposition \ref{prop2-1} indicates that for $\alpha>n$ there is not any minimizing  energy solution to \eqref{HB} for $q=2n/(n+\alpha)$ and $\lambda=0$. In fact, we will show that there is not any positive $C^1$ solution to \eqref{HB}  for $\alpha>n$, $q=2n/(n+\alpha)$ and $\lambda=0$ on any star-shaped domain.

\medskip

\noindent{\bf Proof of part (3) in Theorem \ref{main} (nonexistence part)}. Without loss of generality, here we assume that the origin is in $\Omega$ and the domain is star-shaped with respect to the origin.

Recall the following Pohozaev identity from  \cite{DZ2017}.

\begin{lemma}\label{poh}  Assume that the origin is in $\Omega$ and the domain is star-shaped with respect to the origin. If $u\in C^1(\overline \Omega)$ is a non-negative solution to
\begin{equation}\label{gen_equ}
u(x)=\int_\Omega\frac{u^{p-1}(y)}{|x-y|^{n-\alpha}}dy+ \lambda \int_\Omega\frac{u^{p-1}(y)}{|x-y|^{n-\alpha-1}}dy,\quad x\in \overline\Omega,
\end{equation}
where $p\ne 0$, $\lambda \in \Bbb{R}$,
then
\begin{eqnarray}\label{SB-5}
(\frac{n}{p}+\frac{\alpha-n}2)\int_{\Omega}u^{p}(x)dx= -\frac{\lambda}2\int_{\Omega}\int_{\Omega}\frac{u^{p-1}(x)u^{p-1}(y)}{|x-y|^{n-\alpha}}dydx+\frac1{p}\int_{\partial \Omega}(x\cdot \nu) u^{p}(x)d\sigma,
\end{eqnarray}
where $\nu$ is the outward unit normal vector to $\partial \Omega$.
\end{lemma}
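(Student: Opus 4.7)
The plan is to establish the identity by computing the quantity $J := \int_\Omega x\cdot\nabla u^p(x)\,dx$ in two different ways and equating the results. Since $\alpha>n$, every kernel that will appear below is a positive power of $|x-y|$, so all differentiations under the integral sign and all integrations are harmless; the star-shaped hypothesis plays no role in proving the identity itself (it is only used when one applies it to derive nonexistence).

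The first evaluation of $J$ is immediate: by the divergence theorem applied to $\operatorname{div}(x u^p) = n u^p + x\cdot\nabla u^p$, one obtains
$$J \;=\; \int_{\partial\Omega}(x\cdot\nu)\,u^p\,d\sigma \;-\; n\int_\Omega u^p(x)\,dx.$$

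The second evaluation starts from $J = p\int_\Omega u^{p-1}(x)\,x\cdot\nabla u(x)\,dx$. Differentiating \eqref{gen_equ} under the integral sign produces
$$x\cdot\nabla u(x) = -(n-\alpha)\!\int_\Omega\! u^{p-1}(y)\frac{x\cdot(x-y)}{|x-y|^{n-\alpha+2}}dy - \lambda(n-\alpha-1)\!\int_\Omega\! u^{p-1}(y)\frac{x\cdot(x-y)}{|x-y|^{n-\alpha+1}}dy.$$
Multiplying by $p u^{p-1}(x)$, integrating, and then symmetrizing the resulting double integrals under $x\leftrightarrow y$ by means of the elementary identity
$$x\cdot(x-y) + y\cdot(y-x) = |x-y|^2$$
converts the ``differentiated'' kernels $|x-y|^{-(n-\alpha+2)}$ and $|x-y|^{-(n-\alpha+1)}$ back into the original kernels $|x-y|^{-(n-\alpha)}$ and $|x-y|^{-(n-\alpha-1)}$ of \eqref{gen_equ}. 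This yields
$$J \;=\; -\,\frac{p(n-\alpha)}{2}\,P \;-\; \frac{p\lambda(n-\alpha-1)}{2}\,Q,$$
where $P := \int_\Omega\int_\Omega \tfrac{u^{p-1}(x)u^{p-1}(y)}{|x-y|^{n-\alpha}}\,dxdy$ and $Q := \int_\Omega\int_\Omega \tfrac{u^{p-1}(x)u^{p-1}(y)}{|x-y|^{n-\alpha-1}}\,dxdy$.

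Finally, multiplying \eqref{gen_equ} by $u^{p-1}(x)$ and integrating supplies the consistency relation $\int_\Omega u^p = P + \lambda Q$, which I use to eliminate $P$. Equating the two expressions for $J$ and collecting terms, the coefficient of $Q$ collapses to $\frac{p\lambda}{2}\bigl[(n-\alpha)-(n-\alpha-1)\bigr] = \frac{p\lambda}{2}$, while the coefficient of $\int u^p$ becomes $n + \frac{p(\alpha-n)}{2}$; dividing through by $p$ gives exactly \eqref{SB-5}. There is no serious obstacle: the only nontrivial step is the symmetrization identity above, which is precisely what makes the algebra close up cleanly by relating the gradient kernels back to the kernels in the equation.
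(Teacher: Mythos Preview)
Your argument is correct and is precisely the standard Pohozaev computation for this kind of integral equation: integrate $x\cdot\nabla u^p$ once by parts against the divergence theorem, once by differentiating the equation, and then symmetrize via $x\cdot(x-y)+y\cdot(y-x)=|x-y|^2$. The algebra you outline closes exactly as you say.

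For comparison with the paper: the paper does \emph{not} prove this lemma at all; it simply quotes it from \cite{DZ2017}. So you have supplied a self-contained proof where the authors only give a citation. Your approach is undoubtedly the same as the one in \cite{DZ2017}, since this is the canonical way such identities are obtained.

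Two small remarks. First, the displayed identity \eqref{SB-5} in the paper has a typo: the double integral on the right should carry the kernel $|x-y|^{-(n-\alpha-1)}$ (your $Q$), not $|x-y|^{-(n-\alpha)}$; the authors themselves use the correct kernel a few lines later when they apply the lemma. Your derivation produces the correct version. Second, you are right that the star-shapedness assumption is irrelevant to the identity itself and is only used downstream to force the boundary term to have a sign; it would be cleaner to state the lemma without that hypothesis.
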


\medskip

Applying Lemma \ref{poh} to \eqref{HB} for $\lambda\ge0$ and $u(x)=f^{q-1}(x)$. Thus $p=\frac{q}{q-1}=q^\prime$. Noticing  that $1>q\ge q_\alpha$ is equivalent to  $p\le p_\alpha<0$, we have
\begin{eqnarray*}
-\frac{\lambda}2\int_{\Omega}\int_{\Omega}\frac{u^{p-1}(x)u^{p-1}(y)}{|x-y|^{n-\alpha-1}}dydx+\frac 1p \int_{\partial \Omega}(x\cdot\nu) u^{p-1}(x)d\sigma\ge 0.
\end{eqnarray*}
Since $\Omega$ is star-shaped domain about the origin, we have $x\cdot \nu>0$  on $\partial\Omega$.   If $\lambda>0$, then $u(x)\equiv\infty $ on $\overline{\Omega}$. If $\lambda=0$, then $u\equiv\infty$ on $\partial\Omega$. Therefore we obtain a contradiction to that $f(x)$ is a positive $C^1(\overline{\Omega})$ solution.

 \medskip

 \begin{remark} Condition $q<1$ is needed in our proof.
 If $q>1$, one  may call it a supercritical exponent  (since  it is bigger than $q_\alpha$). However, in this case $p=q/(q-1)>0$ is also bigger than negative $p_\alpha$. Our nonexistence result may not be true any more in this case.
 \end{remark}


\smallskip

Note that the unit ball is conformally equivalent to the upper half space. We have

\begin{corollary}\label{Pro-Half space}
There is no positive solution $u\in L^{p_\alpha}(\overline{\mathbb{R}^n_+}) \cap C^1(\overline {\mathbb{R}^n_+}) $ to
 $$
u(x)=\int_{\mathbb{R}^n_+}\frac{u^{p_\alpha-1}(y)}{|x-y|^{n-\alpha}}dy,\quad x\in \overline{\mathbb{R}^n_+}.
$$
\end{corollary}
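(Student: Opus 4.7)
The idea is to conformally transform a hypothetical solution on $\overline{\mathbb{R}^n_+}$ into a positive $C^1$ solution of the analogous integral equation on the unit ball $B_1$, and then apply part~(3) of Theorem~\ref{main} with $q=q_\alpha$, $\lambda=0$ on the star-shaped domain $B_1$ to derive a contradiction.

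Concretely, let $P = -e_n$ and consider the M\"obius inversion
\[
\phi(x) = -e_n + \frac{2(x+e_n)}{|x+e_n|^2},
\]
centered at $P$ with radius $\sqrt{2}$. One checks that $\phi$ sends the hyperplane $\{x_n=0\}$ onto $\partial B_1(0)$ and $\overline{\mathbb{R}^n_+}$ bijectively onto $\overline{B_1}\setminus\{P\}$, with the point at infinity mapped to $P$. Given a positive solution $u$ of the integral equation on $\overline{\mathbb{R}^n_+}$, I would define the conformal pullback
\[
v(\bar x) := A\,|\bar x - P|^{\alpha-n}\, u(\phi(\bar x))
\]
for a suitable normalizing constant $A > 0$. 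Using the identities $|\phi(\bar x)-\phi(\bar y)| = 2|\bar x-\bar y|/(|\bar x-P||\bar y-P|)$ and $|J_\phi(\bar y)| = 2^n/|\bar y-P|^{2n}$, a direct change of variables in the equation for $u$ yields $v(\bar x) = \int_{B_1} v^{p_\alpha-1}(\bar y)/|\bar x-\bar y|^{n-\alpha}\, d\bar y$ on $B_1\setminus\{P\}$; the exponents of $|\bar y - P|$ cancel because $(n-\alpha)(p_\alpha-1) = n+\alpha$.

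The main obstacle is to show that $v$ extends to a positive $C^1$ function at $P \in \partial B_1$, since $P$ is the image of $\infty$. For this, I would first derive the asymptotic $u(x) \sim L\,|x|^{\alpha-n}$ as $|x| \to \infty$, where $L := \int_{\mathbb{R}^n_+} u^{p_\alpha-1}(y)\, dy$. Evaluating the integral equation at $x=0$ yields $\int u^{p_\alpha-1}(y)\,|y|^{\alpha-n}\, dy < \infty$; combined with the continuity and positivity of $u$ (so that $u^{p_\alpha-1}$ is bounded on compact sets, recalling $p_\alpha-1<0$), this gives $L<\infty$. Expanding $|x-y|^{\alpha-n} = |x|^{\alpha-n}(1 + O(|y|/|x|))$ then produces the leading-order asymptotic. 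Since $|\phi(\bar x)| \sim 2/|\bar x-P|$ as $\bar x \to P$, this forces $v(\bar x) \to 2^{\alpha-n} A L > 0$, so $v$ extends continuously and positively to $P$.

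Finally, with $v$ positive and continuous on $\overline{B_1}$, a standard bootstrap using the smoothness of the kernel $|\bar x-\bar y|^{\alpha-n}$ (valid since $\alpha>n$) and the boundedness of $v^{p_\alpha-1}$ (since $v$ is bounded away from $0$) upgrades the regularity to $v \in C^1(\overline{B_1})$. Then $f := v^{p_\alpha-1}$ is a positive $C^1(\overline{B_1})$ solution of \eqref{HB} on $\Omega=B_1$ with $q=q_\alpha$ and $\lambda=0$, which is precisely ruled out by part~(3) of Theorem~\ref{main}.
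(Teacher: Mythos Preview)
Your proposal is correct and follows exactly the route the paper indicates: the paper offers no proof beyond the single remark that ``the unit ball is conformally equivalent to the upper half space,'' and you have carefully supplied the conformal pullback, the cancellation $(n-\alpha)(p_\alpha-1)=n+\alpha$, the asymptotic $u(x)\sim L|x|^{\alpha-n}$ needed to extend $v$ across the point at infinity, and the regularity bootstrap. The argument is sound and more detailed than what the paper provides.
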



\section{Existence result for subcritical case}

For subcritical exponents we have the following inequality:
\begin{lemma}\label{bound HLS}
Let $q\in (0, {q_\alpha})$.
 There exists a positive constant  $C(n,q,\alpha,\Omega)>0$ such that
 \begin{equation}\label{B-HLS inequality}
 \int_{\Omega} \int_{\Omega} {f}(x)|x-y|^{-(n-\alpha)} {f}(y) dx dy
\ge C(n,q,\alpha,\Omega)\|f\|^2_{L^q(\Omega)}
 \end{equation} holds for any non-negative function  $f\in L^q(\Omega)$.
 \end{lemma}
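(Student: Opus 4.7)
The plan is to reduce the subcritical inequality on $\Omega$ to the sharp reversed HLS inequality on $\mathbb{R}^n$ (Theorem A) plus a standard inclusion of $L^p$-spaces on a finite measure domain.

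First I would extend $f\in L^q(\Omega)$ trivially to $\tilde f$ on $\mathbb{R}^n$. Since $\Omega$ is bounded and $f\ge 0$, the extension $\tilde f$ belongs to $L^{q_\alpha}(\mathbb{R}^n)$ whenever $f\in L^{q_\alpha}(\Omega)$. Applying Theorem A (with $g=\tilde f$) to $\tilde f$ gives
\begin{equation*}
\int_{\Omega}\int_{\Omega}\frac{f(x)f(y)}{|x-y|^{n-\alpha}}\,dxdy
=\int_{\mathbb{R}^n}\int_{\mathbb{R}^n}\frac{\tilde f(x)\tilde f(y)}{|x-y|^{n-\alpha}}\,dxdy
\ge N_\alpha\|f\|^2_{L^{q_\alpha}(\Omega)}.
\end{equation*}
Note this already contains $\|f\|_{L^{q_\alpha}}$; the only thing left is to pass from $q_\alpha$ to the smaller $q$.

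Second, since $q<q_\alpha$ and $|\Omega|<\infty$, the inclusion $L^{q_\alpha}(\Omega)\hookrightarrow L^q(\Omega)$ (a direct consequence of H\"older's inequality, valid for all exponents $0<q<q_\alpha$) yields
\begin{equation*}
\|f\|_{L^q(\Omega)}\le |\Omega|^{\frac1q-\frac1{q_\alpha}}\|f\|_{L^{q_\alpha}(\Omega)},
\end{equation*}
so that
\begin{equation*}
\|f\|^2_{L^{q_\alpha}(\Omega)}\ge |\Omega|^{2\bigl(\frac1{q_\alpha}-\frac1{q}\bigr)}\|f\|^2_{L^q(\Omega)}.
\end{equation*}
Combining the two displays gives \eqref{B-HLS inequality} with the explicit constant
\begin{equation*}
C(n,q,\alpha,\Omega)=N_\alpha\,|\Omega|^{2\bigl(\frac1{q_\alpha}-\frac1{q}\bigr)}>0.
\end{equation*}

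There is really no serious obstacle here: the step that needs a word of justification is the $L^p$-inclusion for $0<q<q_\alpha<1$, since one is outside the usual $p\ge 1$ range for H\"older, but the standard derivation $\int|f|^q\le\bigl(\int|f|^{q_\alpha}\bigr)^{q/q_\alpha}|\Omega|^{1-q/q_\alpha}$ from H\"older with conjugate exponents $q_\alpha/q>1$ and its dual still applies. Everything else is just a direct invocation of Theorem A on the trivial extension.
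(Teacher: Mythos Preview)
Your proof is correct and follows exactly the same route as the paper: extend $f$ by zero, apply Theorem~A to bound the double integral below by $N_\alpha\|f\|_{L^{q_\alpha}(\Omega)}^2$, then use H\"older on the bounded domain to pass from the $L^{q_\alpha}$ norm to the $L^q$ norm. The only addition over the paper's version is that you make the constant explicit and spell out why H\"older still applies when $0<q<q_\alpha<1$, which is a welcome clarification but not a different argument.
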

 \begin{proof}
For $f\in L^q(\Omega)$, by using the reversed HLS inequality \eqref{R-HLS} we have
\begin{eqnarray*}
\langle I_{\alpha, \Omega} f,f\rangle&=& \int_{\mathbb{R}^n} \int_{\mathbb{R}^n} \tilde{f}(x)|x-y|^{-(n-\alpha)} \tilde{f}(y) dx dy\\
&\ge& N_\alpha \|\tilde{f}\|^2_{L^{q_\alpha}(\mathbb{R}^n)}=N_\alpha \|\tilde{f}\|^2_{L^{q_\alpha}(\Omega)}\\
&\ge & C(n, q, \alpha, \Omega)\|f\|^2_{L^q(\Omega)}.
\end{eqnarray*}
\end{proof}

We would like to point out that one can also prove the above lemma directly via a Young type inequality as  that in Dou, Guo and Zhu \cite{DGZ17}.







Based on the above lemma, we can obtain the existence result for subcritical exponent (part (1) of Theorem \ref{main}). 
 Notice that it is different from the case $0<\alpha<n$ (Lemma 3.2 in \cite{DZ2017}) since no compact embedding can be used directly here. We follow a similar approach used in Dou, Guo and Zhu \cite{DGZ17}.

\begin{lemma}\label{Bound-attained} For $0<q<{q_\alpha}, \lambda>-\frac{1}{d(\Omega)}$, infimum
\begin{equation*}
\xi_{\alpha,q}(\Omega):=\inf_{f\in L_+^q(\Omega)}\frac{\int_{\Omega} \int_{\Omega}(f(x)|x-y|^{-(n-\alpha)} f(y)+\lambda f(x)|x-y|^{-(n-\alpha-1)} f(y)) dy dx}{\|f\|^2_{L^q(\Omega)}}>0,
\end{equation*}
 and it is attained by some nonnegative  function in $L_+^q(\Omega).$
\end{lemma}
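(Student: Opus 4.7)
The statement splits into two parts: strict positivity of $\xi_{\alpha,q}(\Omega)$ and its attainment. The main obstacle is an extraction-of-subsequence issue: because $\alpha>n$ forces $q<q_\alpha<1$, the space $L^q(\Omega)$ is not reflexive and no Rellich-type compact embedding is directly available, so the bulk of the work lies in showing that a minimizing sequence has an a.e.\ convergent subsequence.

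The positivity is essentially a kernel comparison followed by Lemma~\ref{bound HLS}. The role of $\lambda>-1/d(\Omega)$ is to produce
$$|x-y|^{-(n-\alpha)}+\lambda|x-y|^{-(n-\alpha-1)}=|x-y|^{\alpha-n}\bigl(1+\lambda|x-y|\bigr)\ge c_\lambda |x-y|^{\alpha-n}$$
on $\bar\Omega\times\bar\Omega$, where $c_\lambda:=\min\{1,\,1+\lambda d(\Omega)\}>0$; Lemma~\ref{bound HLS} then gives $\xi_{\alpha,q}(\Omega)\ge c_\lambda\,C(n,q,\alpha,\Omega)>0$.

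For attainment, I would fix a minimizing sequence $\{f_k\}\subset L_+^q(\Omega)$ with $\|f_k\|_{L^q(\Omega)}=1$, so that the numerator is bounded. Running the proof of Lemma~\ref{bound HLS} with exponent $q_\alpha$ (i.e., applying the sharp reversed HLS inequality of Theorem~A directly to the zero extension $\tilde f_k$) produces a uniform bound $\|f_k\|_{L^{q_\alpha}(\Omega)}\le M$, which by Chebyshev converts into the weak-type estimate $|\{f_k>t\}|\le M^{q_\alpha}t^{-q_\alpha}$. A layer-cake computation, valid precisely because $q<q_\alpha$, then upgrades this to uniform integrability of $\{f_k^q\}$ on $\Omega$; Dunford--Pettis yields $f_k^q\rightharpoonup h$ weakly in $L^1(\Omega)$ along a subsequence, with $h\ge 0$ and $\int_\Omega h\,dx=1$.

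The remaining and hardest step is to upgrade this to pointwise a.e.\ convergence of the $f_k$ themselves. Following \cite{DGZ17}, I would exploit that $K(x,y):=|x-y|^{\alpha-n}(1+\lambda|x-y|)$ is continuous and bounded on $\bar\Omega\times\bar\Omega$ (since $\alpha>n$), so the integral operator $T_K f(x):=\int_\Omega K(x,y)f(y)\,dy$ sends bounded subsets of $L^s(\Omega)$ (any $s<q_\alpha$) to uniformly bounded, equicontinuous families on $\bar\Omega$; coupling this with the approximate Euler--Lagrange relation $f_k^{q-1}\approx \mu_k T_K f_k$ coming from Ekeland's variational principle forces $f_k^{q-1}$ to converge uniformly along a further subsequence to a positive continuous limit, and hence $f_k\to f_0:=h^{1/q}$ a.e. Once a.e.\ convergence is in hand, Fatou's lemma applied to the nonnegative integrand yields $\int\int f_0(x)f_0(y)K(x,y)\,dxdy\le\liminf_k\int\int f_k(x)f_k(y)K(x,y)\,dxdy=\xi_{\alpha,q}(\Omega)$, while the weak $L^1$ limit gives $\|f_0\|^q_{L^q(\Omega)}=\int_\Omega h\,dx=1$; combined with the definition of the infimum, $f_0$ must attain $\xi_{\alpha,q}(\Omega)$.
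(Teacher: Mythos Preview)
Your treatment of strict positivity is correct and matches the paper. For attainment, however, there is a genuine gap. Your claim that ``$T_K$ sends bounded subsets of $L^s(\Omega)$ (any $s<q_\alpha$) to uniformly bounded, equicontinuous families on $\bar\Omega$'' is false when $s<1$. Since $\alpha>n$, the kernel $K$ is merely bounded on $\bar\Omega\times\bar\Omega$ (and vanishes on the diagonal), so the only general estimate is $|T_Kf(x)|\le\|K\|_\infty\|f\|_{L^1(\Omega)}$; an $L^s$ bound with $s<1$ does not control $\|f\|_{L^1}$ (take $f_k=k^{1/s}\chi_{E_k}$ with $|E_k|=1/k$). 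Hence the passage from the $L^{q_\alpha}$ bound on $f_k$ to equicontinuity of $T_Kf_k$ --- and from there, via Ekeland, to uniform convergence of $f_k^{q-1}$ --- is unjustified. This is not a minor omission: a uniform $L^1$ bound on the minimizing sequence is exactly the heart of the paper's proof. The paper normalizes $\|f_j\|_{L^{q_\alpha}}=1$, uses the reversed H\"older inequality to control $\|I_{\alpha,\Omega}f_j\|_{L^{q'}}$, and then obtains $\|f_j\|_{L^1}\le C$ by a measure-theoretic argument that locates two well-separated points $x_j^1,x_j^2\in\Omega$ where $I_{\alpha,\Omega}f_j\le M$, so that $\Omega\subset(\Omega\setminus B_{\epsilon_0/4}(x_j^1))\cup(\Omega\setminus B_{\epsilon_0/4}(x_j^2))$ yields $\int_\Omega f_j\le C\epsilon_0^{n-\alpha}M$.

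A secondary issue: invoking Ekeland on $L_+^q(\Omega)$ with $q<1$ is delicate, since this is only a complete metric space (with $d(f,g)=\int|f-g|^q$), not a Banach space, and making sense of an ``approximate Euler--Lagrange relation'' there requires justification you have not supplied. The paper avoids Ekeland and pointwise convergence altogether: once $\|f_j\|_{L^1}\le C$ is known, it passes to weak convergence $f_j^q\rightharpoonup f_*^q$ in $L^{1/q}(\Omega)$ and proves lower semicontinuity of the numerator by a H\"older-inequality trick, comparing $\iint f_j(x)K(x,y)f_j(y)\,dxdy$ with $\iint f_j^q f_*^{1-q}(x)\,K(x,y)\,f_j^q f_*^{1-q}(y)\,dxdy$, rather than via Fatou.
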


\begin{proof}  Notice:  for $x, \ y \in \Omega$ and $0>\lambda>-1/d(\Omega),$ $1+\lambda|x-y|\ge 1+\lambda d(\Omega)>0$.  Thus we know $\xi_{\alpha,q}(\Omega)>0$ by Lemma \ref{bound HLS}.

 Choose a minimizing nonnegative sequence $\{f_j\}_{j=1}^\infty$ in $L^q(\Omega)$. Assume without loss of generality that $f_j\in L^{q_\alpha}(\Omega)$ (see, for example, Proposition 2.5 in \cite{DGZ17}). Then we can normalize it such that $\|f_j\|_{L^{q_\alpha}(\Omega)}=1$. It follows that  there exists a subsequence such that
\[f_j^q\rightharpoonup f_*^q\quad\text{weakly\, in}\quad L^{\frac{q_\alpha}{q}}(\Omega), \ \ \mbox{as} \ j\to\infty.
\]
Then
\begin{equation}\label{equ1-Bound-attained}
\int_{\Omega}f_j^q\to \int_{\Omega}f_*^q, \ \ \mbox{as} \ j\to\infty.
\end{equation}

Claim: $\|f_j\|_{L^{1}(\Omega)}\le C.$

We relegate the proof of this claim to the end.

Once the claim is proved, we have $\int_{\Omega}f_*^q>C>0$ via an interpolation inequality
and $f_j^q\rightharpoonup f_*^q\quad\text{weakly\, in}\quad L^{\frac{1}{q}}(\Omega).$
Then for any fixed $x\in\overline{\Omega}$, $f_*^{1-q}(y)|x-y|^{\alpha-n}(1+\lambda|x-y|)\in L^{\frac{1}{1-q}}(\Omega)$, thus, as $j \to \infty$,
\begin{eqnarray*}
\int_{\Omega}f_j^q(y)f_*^{1-q}(y)|x-y|^{\alpha-n}(1+\lambda|x-y|)dy\to\int_{\Omega} f_*(y)|x-y|^{\alpha-n}(1+\lambda|x-y|)dy.
\end{eqnarray*}

Further, we show  that the above convergence is actually uniformly convergent for  all $x\in\overline\Omega$.

By H\"{o}lder's inequality we have
\begin{eqnarray*}
&&\int_{\Omega}f_j^q(y)f_*^{1-q}(y)|x-y|^{\alpha-n}(1+\lambda|x-y|)dy\\
&\le & (\int_{\Omega}f_j^{q\cdot\frac{1}{q}}(y)dy)^q(\int_{\Omega}(f_*^{1-q}(y)|x-y|^{\alpha-n}(1+\lambda|x-y|))^\frac{1}{1-q}dy)^{1-q}\le C,
\end{eqnarray*}
that is, $\int_{\Omega}f_j^q(y)f_*^{1-q}(y)|x-y|^{\alpha-n}(1+\lambda|x-y|)dy$ is uniformly bounded for $x\in\overline{\Omega}$. Notice that for any $x_1,x_2,y\in\overline{\Omega},$
\begin{equation*}
||x_1-y|^{\alpha-n}-|x_2-y|^{\alpha-n}|
\le \begin{cases}
C|x_1-x_2|^{\alpha-n},\ \mbox{if}~0<\alpha-n\le 1,\\
C|x_1-x_2|,\ \ \ \ \ \ \mbox{if}~\alpha-n>1.
\end{cases}
\end{equation*}
Then for any $x_1,x_2\in\overline{\Omega}, $
\begin{eqnarray*}
&&\ \ |\int_{\Omega}f_j^q(y)f_*^{1-q}(y)|x_1-y|^{\alpha-n}(1+\lambda|x_1-y|)dy\\
&&-\int_{\Omega}f_j^q(y)f_*^{1-q}(y)|x_2-y|^{\alpha-n}(1+\lambda|x_2-y|)dy|\\
&\le &\int_{\Omega}f_j^q(y)f_*^{1-q}(y)||x_1-y|^{\alpha-n}(1+\lambda|x_1-y|)-|x_2-y|^{\alpha-n}(1+\lambda|x_2-y|)|dy
\\
&\le &C\max(|x_1-x_2|^{\alpha-n}, |x_1-x_2|)\int_{\Omega}f_j^q(y)f_*^{1-q}(y)dy
\\
&\le &C\max(|x_1-x_2|^{\alpha-n}, |x_1-x_2|)(\int_{\Omega}f_j(y)dy)^q(\int_{\Omega}f_*(y)dy)^{1-q}\\
&\le& C\max(|x_1-x_2|^{\alpha-n}, |x_1-x_2|).
\end{eqnarray*}
Thus $\int_{\Omega}f_j^q(y)f_*^{1-q}(y)|x-y|^{\alpha-n}(1+\lambda|x-y|)dy$ is equicontinuous in $ \overline{\Omega}$. It follows that, as $j\to\infty$,
\[\int_{\Omega}f_j^q(y)f_*^{1-q}(y)|x-y|^{\alpha-n}(1+\lambda|x-y|)dy\to\int_{\Omega} f_*(y)|x-y|^{\alpha-n}(1+\lambda|x-y|)dy
\]
uniformly for $x\in\overline{\Omega}$.

Therefore for any $\epsilon>0$ small enough, there exists $j_0\in \mathbb{N}$ such that for any $j>j_0$,
\begin{eqnarray*}
&&\int_{\Omega}\int_{\Omega}f_j^q(x)f_*^{1-q}(x)|x-y|^{\alpha-n}(1+\lambda|x-y|)f_j^q(y)f_*^{1-q}(y)dxdy\\
&\ge& \int_{\Omega}f_j^q(x)f_*^{1-q}(x)[\int_{\Omega}|x-y|^{\alpha-n}(1+\lambda|x-y|)f_*(y)dy-\epsilon]dx.
\end{eqnarray*}
By H\"{o}lder's inequality we know $\int_{\Omega}f_j^q(x)f_*^{1-q}(x)dx\le C$.  Again,  notice that
\begin{eqnarray*}
&&\int_{\Omega}f_j^q(x)f_*^{1-q}(x)\int_{\Omega}|x-y|^{\alpha-n}(1+\lambda|x-y|)f_*(y)dydx \\
&\to& \int_{\Omega}\int_{\Omega}f_*(x)|x-y|^{\alpha-n}(1+\lambda|x-y|)f_*(y)dydx
\end{eqnarray*}
since $f_*^{1-q}(x)\int_{\Omega}|x-y|^{\alpha-n}(1+\lambda|x-y|)f_*(y)dy\in L^{\frac{1}{1-q}}(\Omega)$. We have for $j>j_0$ large enough,
\begin{eqnarray*}
&&\int_{\Omega}\int_{\Omega}f_j^q(x)f_*^{1-q}(x)|x-y|^{\alpha-n}(1+\lambda|x-y|)f_j^q(y)f_*^{1-q}(y)dxdy\\
&\ge& \int_{\Omega}\int_{\Omega}f_*(x)|x-y|^{\alpha-n}(1+\lambda|x-y|)f_*(y)dydx-\epsilon-C\epsilon.
\end{eqnarray*}
Similarly, we also have, for $j>j_0$ large enough,
\begin{eqnarray*}
&&\int_{\Omega}\int_{\Omega}f_j^q(x)f_*^{1-q}(x)|x-y|^{\alpha-n}(1+\lambda|x-y|)f_j^q(y)f_*^{1-q}(y)dxdy\\
&\le& \int_{\Omega}\int_{\Omega}f_*(x)|x-y|^{\alpha-n}(1+\lambda|x-y|)f_*(y)dydx+\epsilon+C\epsilon.
\end{eqnarray*}
Therefore, as $j \to \infty$,
\begin{eqnarray*}
&&\int_{\Omega}\int_{\Omega}f_j^q(x)f_*^{1-q}(x)|x-y|^{\alpha-n}(1+\lambda|x-y|)f_j^q(y)f_*^{1-q}(y)dxdy\\
&\to& \int_{\Omega}\int_{\Omega}f_*(x)|x-y|^{\alpha-n}(1+\lambda|x-y|)f_*(y)dydx.
\end{eqnarray*}
It follows from the above and H\"{o}lder's inequality that
\begin{eqnarray*}
&&\liminf\limits_{j\to\infty}\int_{\Omega}\int_{\Omega}f_j(x)|x-y|^{\alpha-n}(1+\lambda|x-y|)f_j(y)dxdy\\
&\ge& \int_{\Omega}\int_{\Omega} f_*(x)|x-y|^{\alpha-n}(1+\lambda|x-y|)f_*(y)dxdy.
\end{eqnarray*}
Since $||f_j||_{L^q(\Omega)} \to ||f_*||_{L^q(\Omega)} >0$, the above inequality then implies
\begin{eqnarray*}
&&\liminf\limits_{j\to\infty}\frac{\int_{\Omega}\int_{\Omega}f_j(x)|x-y|^{\alpha-n}(1+\lambda|x-y|)f_j(y)dxdy}{\|f_j\|^2_{L^q(\Omega)}}\\
&\ge&\frac{\int_{\Omega}\int_{\Omega} f_*(x)|x-y|^{\alpha-n}(1+\lambda|x-y|)f_*(y)dxdy}{\|f_*\|^2_{L^q(\Omega)}}.
\end{eqnarray*}
That is: $f_*$ is a minimizer.

Now we are left to prove the claim: $\|f_j\|_{L^{1}(\Omega)}\le C.$


From  $\|f_j\|_{L^{q_\alpha}(\Omega)}=1$, we have
\begin{eqnarray*}
&&\int_{\Omega}\int_{\Omega}f_j(x)|x-y|^{\alpha-n}(1+\lambda|x-y|)f_j(y)dxdy\\
&\ge&(1-|\lambda| d(\Omega))\int_{\Omega}\int_{\Omega}f_j(x)|x-y|^{\alpha-n}f_j(y)dxdy\\
&\ge&  (1-|\lambda| d(\Omega)) N_\alpha\|f_j\|_{L^{q_\alpha}(\Omega)}^2\\
&\ge& C>0.
\end{eqnarray*}
 Since  $\{f_j\}_{j=1}^\infty$ is a minimizing nonnegative sequence, we conclude that  $\|f_j\|_{L^{q}(\Omega)}\ge C_1(n,\alpha,q)>0$.
By H\"{o}lder's inequality, we have
\begin{eqnarray*}
0<C_1(n,\alpha,q)\le\|f_j\|_{L^{q}(\Omega)}\le C_2(n,\alpha,q)\|f_j\|_{L^{q_\alpha}(\Omega)}=C_2(n,\alpha,q).
\end{eqnarray*}
The upper bound on $ ||f_j\|_{L^{q}(\Omega)} $ indicates that
$$
\int_{\Omega}\int_{\Omega}f_j(x)|x-y|^{\alpha-n}f_j(y)dxdy<C_3(n,\alpha,q).
$$
It follows, via reversed H\"{o}lder's inequality, that
 $$\|I_{\alpha, \Omega} f_j\|_{L^{q^\prime}(\Omega)}\le C_4(n,\alpha,q)<\infty.$$

Further,  H\"{o}lder's inequality and reversed HLS inequality yield  that
\begin{eqnarray*}
\infty>C_4(n,\alpha,q) &\ge &\|I_{\alpha, \Omega} f_j\|_{L^{q^\prime}(\Omega)}\\
&\ge & C_5(n,\alpha,q) \|I_{\alpha, \Omega} f_j\|_{L^{p_\alpha}(\Omega)}\\
&\ge & C_6(n,\alpha,q) \|f_j\|_{L^{q_\alpha}(\Omega)}= C_6(n,\alpha,q)>0.
\end{eqnarray*}

Then we can show that for $M>0$ such that $M^{q^\prime}|\Omega|<\frac{1}{2} (C_4(n,\alpha,q))^{q^\prime}$, there exists $0<\delta<|\Omega|$ such that
\begin{equation}
\label{add-1}
m\{x: I_{\alpha, \Omega} f_j(x)\le M\}>\delta, ~{for~all} \ \ j.
\end{equation}
In fact, for  $\Omega_1:=\{x: I_{\alpha, \Omega} f_j(x)\le M\}$,
\begin{eqnarray*}
&&(C_4(n,\alpha,q))^{q^\prime}\\
&\le&
\int_{\Omega} (I_{\alpha, \Omega} f_j)^{q^\prime}dx=\int_{\Omega\setminus\Omega_1} (I_{\alpha, \Omega} f_j)^{q^\prime}dx+\int_{\Omega_1} (I_{\alpha, \Omega} f_j)^{q^\prime}dx\\
&\le & M^{q^\prime}|\Omega|+(\int_{\Omega_1} (I_{\alpha, \Omega} f_i)^{p_\alpha}dx)^{\frac{q^\prime}{p_\alpha}}|\Omega_1|^{1-\frac{q^\prime}{p_\alpha}}\\
&\le & M^{q^\prime}|\Omega|+(\int_{\Omega} (I_{\alpha, \Omega} f_i)^{p_\alpha}dx)^{\frac{q^\prime}{p_\alpha}}|\Omega_1|^{1-\frac{q^\prime}{p_\alpha}}\\
&\le & M^{q^\prime}|\Omega|+(\frac{C_6(n,\alpha,q)}{C_5(n,\alpha,q)})^{q^\prime}|\Omega_1|^{1-\frac{q^\prime}{p_\alpha}}\\
&<&\frac{1}{2}(C_4(n,\alpha,q))^{q^\prime}+|\Omega_1|^{1-\frac{q^\prime}{p_\alpha}}(\frac{C_6(n,\alpha,q)}{C_5(n,\alpha,q)})^{q^\prime},
\end{eqnarray*}
which yields the existence of such $\delta>0$.

Due to \eqref{add-1} we know that  there exists $\epsilon_0>0$,  such that for any $j$, we can find two points $x_j^1, x_j^2\in \Omega$ with the properties that $|x_j^1-x_j^2|\ge\epsilon_0$ and $$I_{\alpha, \Omega} f_j(x_j^1)=\int_{\Omega} f_j(y)|x_j^1-y|^{\alpha-n}\le M,~ I_{\alpha, \Omega} f_j(x_j^2)=\int_{\Omega} f_j(y)|x_j^2-y|^{\alpha-n}\le M.$$
So
\begin{eqnarray*}
\int_{\Omega}f_j(y)dy&\le & \int_{\Omega \setminus B(x_j^1, \frac{\epsilon_0}{4})}f_j(y)dy+\int_{\Omega \setminus B(x_j^2, \frac{\epsilon_0}{4})}f_j(y)dy\\
&\le& (\frac{4}{\epsilon_0})^{\alpha-n}\int_{\Omega \setminus B(x_j^1, \frac{\epsilon_0}{4})}f_j(y)|x_j^1-y|^{\alpha-n}dy\\
&&+(\frac{4}{\epsilon_0})^{\alpha-n}\int_{\Omega \setminus B(x_j^1, \frac{\epsilon_0}{4})}f_j(y)|x_j^2-y|^{\alpha-n}dy\\
&\le&(\frac{4}{\epsilon_0})^{\alpha-n}2M,
\end{eqnarray*}
uniformly for all $j$. We thus verify the claim, and hereby, complete the proof of Lemma \ref{Bound-attained}.
\end{proof}

\smallskip

It is standard to check that  the  minimizer $f(x)$ for energy $\xi_{\alpha,q}(\Omega)$ is positive, and, up to a constant multiplier, satisfies the following equation:
\begin{equation}\label{EL-equ-2}
f^{q-1}(x)=\int_\Omega\frac{f(y)}{|x-y|^{n-\alpha}}dy+\lambda\int_\Omega\frac{f(y)}{|x-y|^{n-\alpha-1}}dy,\quad x\in\overline\Omega.
\end{equation}
From the proof of Lemma \ref{Bound-attained}, we also know that $f(x) \in L^1(\Omega)$, thus $f^{q-1} (x) \in L^\infty(\Omega)$ by equation \eqref{EL-equ-2}.

 Writing $u(x)=f^{q-1}(x), p=q'$,  we thus find a weak positive solution $u(x)\in L^p(\Omega)$ to
\begin{equation}\label{EL-equ-3}
u(x)=\int_\Omega\frac{u^{p-1}(y)}{|x-y|^{n-\alpha}}dy+\lambda\int_\Omega\frac{u^{p-1}(y)}{|x-y|^{n-\alpha-1}}dy,\quad x\in\overline\Omega
\end{equation}
for $0>p>\frac{2n}{n-\alpha}=p_\alpha.$ To complete the proof of part (1) in Theorem \ref{main}, we need to show that $u \in C^1(\overline \Omega)$.
%

In fact, $f(x)\in L^1(\Omega)$ implies
$$\int_\Omega u^{p-1}(y)dy<\infty.$$
It is easy to see that $u \in C(\overline \Omega)$ from equation \eqref{EL-equ-3}. To show $u\in C^1(\overline{\Omega})$, we can  directly compute, for $i=1,..., n$,
$$\partial_{x_i}u(x)=-(n-\alpha)\int_{\Omega}\frac{u^{p-1}(y)(x_i-y_i)}{|x-y|^{n-\alpha+2}}dy \in C(\overline{\Omega}).
$$
Part (1) of Theorem \ref{main} is hereby proved.


\medskip


  It is interesting to study some properties about the  positive solutions to the new integral equation \eqref{EL-equ-3}(such as multiplicity of solutions, blowup behavior as $q \to q_\alpha$ in a star-shaped domain, etc.)
  In the rest of this section,  as in \cite{DZ2017}, we will  show that
even though the boundary condition is not given pointwise,  the symmetric property for solutions to the integral equation \eqref{EL-equ-3} with $\lambda=0$  on a unit ball still holds. Contrary to the result in \cite{DZ2017}, here we will show that the solution is {\it monotone increasing} due to the monotone increasing property of the kernel.


On $B_1:=B_1(0)=\{x\in\mathbb{R}^n\,|\, |x|<1,x\in\mathbb{R}^n\},$ for  $\lambda=0$ we rewrite the equation \eqref{EL-equ-3} as
\begin{equation}\label{UB-2}
u(x)=\int_{B_1}\frac{u^{p-1}(y)}{|x-y|^{n-\alpha}}dy,\quad x\in B_1(0).
\end{equation}
 We have
 \begin{theorem}\label{UB-symmetry}
Let $\alpha>n$, $p\in(p_\alpha, 0)$. Then every positive solution $u\in L^p (\overline{B_1})$ to \eqref{UB-2} is radially symmetric about the origin and strictly increasing in the radial direction.
\end{theorem}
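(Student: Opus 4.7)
My approach is the method of moving planes for integral equations, adapted to \eqref{UB-2}. I would first verify some preliminary regularity: any positive $u\in L^p(\overline{B_1})$ solving \eqref{UB-2} lies in $C(\overline{B_1})$ and satisfies $0<c\le u(x)\le C$ on $\overline{B_1}$. This follows because the kernel $|x-y|^{\alpha-n}$ is bounded above on $\overline{B_1}\times\overline{B_1}$ and $x\mapsto\int_{B_1}|x-y|^{\alpha-n}\,dy$ is a continuous strictly positive function on $\overline{B_1}$; a short bootstrap from \eqref{UB-2} yields the two-sided bound and continuity. By the rotational invariance of \eqref{UB-2}, proving reflectional symmetry about every hyperplane through the origin establishes radial symmetry, so I fix the direction $e_1$.

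For $\lambda\in(-1,0]$ let $\Sigma_\lambda=\{x\in B_1:x_1<\lambda\}$, let $x^\lambda$ denote the reflection of $x$ across $\{x_1=\lambda\}$, and set $\tilde\Sigma_\lambda=\{x^\lambda:x\in\Sigma_\lambda\}$, $D_\lambda=B_1\setminus(\Sigma_\lambda\cup\tilde\Sigma_\lambda)$, $u_\lambda(x)=u(x^\lambda)$, $w_\lambda=u-u_\lambda$. Writing \eqref{UB-2} for both $u(x)$ and $u(x^\lambda)$, splitting each integral over the three pieces $\Sigma_\lambda,\tilde\Sigma_\lambda,D_\lambda$, and using the isometry $|x-y^\lambda|=|x^\lambda-y|$, I obtain, for $x\in\Sigma_\lambda$,
\begin{align*}
w_\lambda(x)&=\int_{\Sigma_\lambda}\bigl(u^{p-1}(y)-u_\lambda^{p-1}(y)\bigr)\bigl(|x-y|^{\alpha-n}-|x-y^\lambda|^{\alpha-n}\bigr)\,dy\\
&\quad+\int_{D_\lambda}u^{p-1}(y)\bigl(|x-y|^{\alpha-n}-|x^\lambda-y|^{\alpha-n}\bigr)\,dy.
\end{align*}
The elementary identity $|x-y|^2-|x-y^\lambda|^2=4(x_1-\lambda)(\lambda-y_1)$ shows the first kernel difference is strictly negative when $x,y\in\Sigma_\lambda$, while the analogous computation shows the second kernel difference is strictly positive when $x\in\Sigma_\lambda$ and $y\in D_\lambda$ (the sign change comes from $y_1>\lambda$ in $D_\lambda$); both conclusions use $\alpha-n>0$.

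The narrow-region estimate is next. On the bad set $\Sigma_\lambda^+:=\{y\in\Sigma_\lambda:w_\lambda(y)<0\}$ the first integrand has the unwanted sign; discarding the favorable parts (including the entire $D_\lambda$ integral) and using the mean value theorem with $c\le u\le C$ (so that $|u^{p-1}-u_\lambda^{p-1}|\le|p-1|c^{p-2}|w_\lambda|$) together with $|x-y^\lambda|^{\alpha-n}\le 2^{\alpha-n}$, one obtains
$$-w_\lambda(x)\le K\int_{\Sigma_\lambda^+}(-w_\lambda(y))\,dy,\qquad x\in\Sigma_\lambda^+,$$
with $K=K(n,\alpha,p,c,C)$. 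Taking the $L^\infty$-norm on $\Sigma_\lambda^+$ then forces $\Sigma_\lambda^+=\emptyset$ whenever $|\Sigma_\lambda|<1/K$, so $w_\lambda\ge 0$ in $\Sigma_\lambda$ for all $\lambda$ close to $-1$. Define $\lambda_0=\sup\{\lambda\in(-1,0]:w_\mu\ge 0\text{ in }\Sigma_\mu\text{ for every }\mu\in(-1,\lambda]\}$; the main task is to prove $\lambda_0=0$.

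The principal obstacle is this last step. Suppose $\lambda_0<0$. Then $w_{\lambda_0}\ge 0$ in $\Sigma_{\lambda_0}$ makes the first integrand nonnegative, and \emph{crucially} the second integral is strictly positive because $D_{\lambda_0}$ has positive Lebesgue measure---a ball-specific feature that fails exactly at $\lambda_0=0$, which is what makes the procedure terminate at the correct place. Hence $w_{\lambda_0}>0$ strictly on $\Sigma_{\lambda_0}$. Choosing a compact $K\subset\Sigma_{\lambda_0}^{\circ}$ whose complement in $\Sigma_{\lambda_0}$ has arbitrarily small measure, and using continuity of $u$ (hence of $w_\lambda$) in $\lambda$, one finds $w_{\lambda_0+\epsilon}>0$ on $K$ for sufficiently small $\epsilon>0$. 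Then $\Sigma_{\lambda_0+\epsilon}^+\subset(\Sigma_{\lambda_0}\setminus K)\cup(\Sigma_{\lambda_0+\epsilon}\setminus\Sigma_{\lambda_0})$ has measure $<1/K$, and the narrow-region estimate again gives $\Sigma_{\lambda_0+\epsilon}^+=\emptyset$, contradicting maximality. With $\lambda_0=0$ established, the symmetric argument from $+e_1$ gives the opposite inequality and hence symmetry across $\{x_1=0\}$; varying the direction yields $u(x)=U(|x|)$. Finally, for $\lambda\in(-1,0)$ the strict inequality $w_\lambda>0$ on $\Sigma_\lambda$, combined with the identity $|x|^2-|x^\lambda|^2=4\lambda(x_1-\lambda)>0$ for $x_1<\lambda<0$, gives $U(|x|)>U(|x^\lambda|)$ with $|x|>|x^\lambda|$. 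Since every pair $0\le r_2<r_1<1$ is realized by choosing $x=-r_1e_1$ and $\lambda=-(r_1+r_2)/2$, the function $U$ is strictly increasing on $[0,1)$.
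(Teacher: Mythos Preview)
Your proposal is correct and follows the same moving-planes strategy as the paper, with the identical decomposition over $\Sigma_\lambda$, $\widetilde\Sigma_\lambda$, and $D_\lambda=\Sigma_\lambda^C\setminus\widetilde\Sigma_\lambda$ and the same use of the $D_\lambda$ integral to force strict inequality at an interior $\lambda_0$. The only technical differences are: (i) to start the plane the paper computes $\partial_{x_1}u<0$ at $x_1=-1$ using $C^1$ regularity, whereas you invoke the narrow-region estimate directly via smallness of $|\Sigma_\lambda|$; and (ii) in the continuation step the paper exploits smallness of the kernel difference $|\,|x-y|^{\alpha-n}-|x^\lambda-y|^{\alpha-n}|\le\delta_1$ on the thin strip and closes in $L^1$, while you bound the kernel uniformly by $2^{\alpha-n}$ and close in $L^\infty$ via smallness of $|\Sigma_\lambda^+|$. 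Both variants are standard and your version arguably needs only the $C^0$ two-sided bound rather than $C^1$. (Minor note: you use $K$ for both the constant and the compact set in the continuation step.)
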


Easy to see from the proof of Part (1) of Theorem \ref{main}  that $u\in C^{1}(\overline{B_1})$. We will use the method of moving planes to  prove Theorem \ref{UB-symmetry}.


Firstly, we recall the idea of the method of moving planes in $B_1$(see e.g. \cite{CFY2014,DZ2017}).

For any real number $\lambda\in (-1,0)$, define $T_\lambda=\{x\in \mathbb{R}^n \ | \ x_1=\lambda\},$
and  $x^\lambda=(2\lambda-x_1,x_2,\cdots,x_n)$ as the reflection of point $x=(x_1,x_2,\cdots,x_n)$ about  plane $T_\lambda$.
Let
\begin{eqnarray*}
\Sigma_\lambda=\{x=(x_1,x_2,\cdots,x_n)\in B_1|-1<x_1<\lambda\},
\end{eqnarray*}
and $\Sigma^C_\lambda=B_1\backslash \overline \Sigma_\lambda$ be the complement of $\Sigma_\lambda$ in $B_1$. Set $u_\lambda(x)=u(x^\lambda).$
We shall  complete the proof  in two steps. In step 1, we show that for $\lambda$ sufficiently close to $-1$,
\begin{equation}\label{UB-6}
u(x)\ge u_\lambda(x),  \quad \forall x\in \Sigma_\lambda.
\end{equation}
Then we can start to move  plane $T_\lambda$  along the $x_1$ direction. In step 2, we move the plane to the right as long as inequality \eqref{UB-6} holds. We show that the plane can be  moved to $\lambda=0$. So
\begin{equation}\label{UB-7}
u(-x_1,x_2,\cdots,x_n)\geq u(x_1,x_2,\cdots,x_n), \quad\forall x\in B_1,x_1\geq 0.
\end{equation}
Similarly, we can start to move  plane $T_\lambda$ from a place close to $\lambda=1$, and move it to the left limiting position $T_0$. Then
\begin{equation}\label{UB-8}
u(-x_1,x_2,\cdots,x_n)\leq u(x_1,x_2,\cdots,x_n),  \quad\forall x\in B_1,x_1\geq 0.
\end{equation}
By \eqref{UB-7} and \eqref{UB-8}, we have that $u(x)$ is symmetric about the plane $x_1=0$. Similarly, we can show that $u(x)$ is symmetric about any plane passing through the origin,   which then implies that $u(x)$ is radially symmetric about the origin and strictly increasing in the radial direction.

First, we have following comparison inequality.
\begin{lemma}\label{Lm-symmetry-1}
For any $x\in \Sigma_\lambda$ with $\lambda\in (-1,0)$, it holds
\begin{equation}\label{UB-3}
u(x)-u_\lambda(x)\geq\int_{\Sigma_\lambda}\big[\frac1{|x-y|^{n-\alpha}}-\frac1{|x^\lambda-y|^{n-\alpha}}\big]({u^{p-1}(y)}-{u^{p-1}_\lambda(y)})dy.
\end{equation}
\end{lemma}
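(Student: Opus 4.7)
My plan is to derive an exact identity for $u(x)-u_\lambda(x)$ via a reflection-based decomposition of $B_1$, then to discard one leftover piece whose sign works in my favor. First, for any $\lambda\in(-1,0)$ and $y\in\Sigma_\lambda$, the computation $|y^\lambda|^2-|y|^2=4\lambda(\lambda-y_1)<0$ shows that the reflected cap $\Sigma_\lambda^\lambda$ lies entirely inside $B_1$. Consequently, up to a null set, $B_1$ decomposes as the disjoint union $\Sigma_\lambda\cup\Sigma_\lambda^\lambda\cup(\Sigma_\lambda^C\setminus\Sigma_\lambda^\lambda)$.

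Splitting the defining integral $u(x)=\int_{B_1}|x-y|^{\alpha-n}u^{p-1}(y)\,dy$ along this decomposition and changing variables $y\mapsto y^\lambda$ on the $\Sigma_\lambda^\lambda$-piece (using the reflection identities $|x-y^\lambda|=|x^\lambda-y|$ and $u(y^\lambda)=u_\lambda(y)$) produces a sum of three integrals, two over $\Sigma_\lambda$ and one over $\Sigma_\lambda^C\setminus\Sigma_\lambda^\lambda$. Carrying out the analogous split for $u_\lambda(x)=\int_{B_1}|x^\lambda-y|^{\alpha-n}u^{p-1}(y)\,dy$ and subtracting, the two $\Sigma_\lambda$ contributions combine cleanly into the target integral on the right-hand side of \eqref{UB-3}, leaving the single extra term
\begin{equation*}
J_\lambda(x)=\int_{\Sigma_\lambda^C\setminus\Sigma_\lambda^\lambda}\bigl[|x-y|^{\alpha-n}-|x^\lambda-y|^{\alpha-n}\bigr]u^{p-1}(y)\,dy.
\end{equation*}

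The final step is a sign analysis based on the elementary identity $|x^\lambda-y|^2-|x-y|^2=4(\lambda-x_1)(\lambda-y_1)$: for $x\in\Sigma_\lambda$ and $y\in\Sigma_\lambda^C\setminus\Sigma_\lambda^\lambda$ one has $x_1<\lambda<y_1$, so the right-hand side is negative and $|x^\lambda-y|<|x-y|$. Since $\alpha-n>0$, the kernel is increasing in distance, so the bracket in $J_\lambda$ is positive; combined with $u^{p-1}(y)>0$ (from $u>0$ and $p-1<0$), this yields $J_\lambda(x)\ge 0$, which can be discarded to obtain \eqref{UB-3}. The one real subtlety, and the main point requiring care, is the reversed monotonicity of the kernel relative to the classical moving-planes setting: because $\alpha>n$, the sign behavior of $|x-y|^{\alpha-n}-|x^\lambda-y|^{\alpha-n}$ is opposite to what one normally encounters, and this is precisely what forces the monotonicity asserted in Theorem \ref{UB-symmetry} to be \emph{increasing} in the radial direction rather than decreasing.
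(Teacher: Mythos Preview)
Your proof is correct and follows essentially the same approach as the paper's: both decompose $B_1$ into $\Sigma_\lambda$, its reflection $\widetilde{\Sigma}_\lambda$ (your $\Sigma_\lambda^\lambda$), and the remainder $\Sigma_\lambda^C\setminus\widetilde{\Sigma}_\lambda$, change variables on the reflected piece, and then discard the nonnegative remainder term (your $J_\lambda$) using $|x-y|>|x^\lambda-y|$ together with $\alpha>n$. Your presentation in fact supplies a couple of details the paper leaves implicit, namely the verification that $\widetilde{\Sigma}_\lambda\subset B_1$ and the explicit identity $|x^\lambda-y|^2-|x-y|^2=4(\lambda-x_1)(\lambda-y_1)$.
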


\begin{proof}
Let $\widetilde{\Sigma}_\lambda=\{x^\lambda~|~x\in{\Sigma}_\lambda\}$ be the reflection of $\Sigma_\lambda$ about  plane $T_\lambda$, then
\begin{eqnarray*}
u(x)&=&\int_{\Sigma_\lambda}\frac{u^{p-1}(y)}{|x-y|^{n-\alpha}}dy+\int_{\widetilde{\Sigma}_\lambda}\frac{u^{p-1}(y)}{|x-y|^{n-\alpha}}dy\\
&+&\int_{\Sigma^C_\lambda\backslash\widetilde{\Sigma}_\lambda}\frac{u^{p-1}(y)}{|x-y|^{n-\alpha}}dy.
\end{eqnarray*}
Noting that $|x-y|>|x^\lambda-y|$ in $x\in \Sigma_\lambda, y\in\Sigma^C_\lambda\backslash\widetilde{\Sigma}_\lambda $, we have
\begin{eqnarray*}
u(x)-u_\lambda(x)&=&\int_{\Sigma_\lambda}\big[\frac1{|x-y|^{n-\alpha}}-\frac1{|x^\lambda-y|^{n-\alpha}}\big]{u^{p-1}(y)}dy\\
&&+\int_{\widetilde{\Sigma}_\lambda}\big[\frac1{|x-y|^{n-\alpha}}-\frac1{|x^\lambda-y|^{n-\alpha}}\big]{u^{p-1}(y)}dy\\
&&+\int_{\Sigma^C_\lambda\backslash\widetilde{\Sigma}_\lambda}\big[\frac1{|x-y|^{n-\alpha}}-\frac1{|x^\lambda-y|^{n-\alpha}}\big]{u^{p-1}(y)}dy\\
&\geq&\int_{\Sigma_\lambda}\big[\frac1{|x-y|^{n-\alpha}}-\frac1{|x^\lambda-y|^{n-\alpha}}\big]{u^{p-1}(y)}dy\\
&&+\int_{\Sigma_\lambda}\big[\frac1{|x-y^\lambda|^{n-\alpha}}-\frac1{|x^\lambda-y^\lambda|^{n-\alpha}}\big]{u^{p-1}_\lambda(y)}dy\\
&=&\int_{\Sigma_\lambda}\big[\frac1{|x-y|^{n-\alpha}}-\frac1{|x^\lambda-y|^{n-\alpha}}\big]{u^{p-1}(y)}dy\\
&&-\int_{\Sigma_\lambda}\big[\frac1{|x-y|^{n-\alpha}}-\frac1{|x^\lambda-y|^{n-\alpha}}\big]{u^{p-1}_\lambda(y)}dy\\
&=&\int_{\Sigma_\lambda}\big[\frac1{|x-y|^{n-\alpha}}-\frac1{|x^\lambda-y|^{n-\alpha}}\big]({u^{p-1}(y)}-{u_\lambda^{p-1}(y)})dy.
\end{eqnarray*}
\end{proof}


\noindent \textbf{Proof of Theorem \ref{UB-symmetry}.}

\noindent  \textbf{Step 1.}  Let $u\in C^1(\overline{B_1})$ be a positive solution to equation  \eqref{UB-2}.  We show that for $\lambda$  sufficiently close to $-1$, inequality \eqref{UB-6} holds.

From \eqref{UB-2} we have
\begin{eqnarray*}
\frac{\partial u(x)}{\partial x_1}|_{x_1=-1}
&=&(\alpha-n)\int_{B_1}|x-y|^{\alpha-n-2}(-1-y_1)u^{p-1}(y)dy\\
&<&0.
\end{eqnarray*}
Therefore for $\lambda$ sufficiently close to $-1$, we have
\begin{eqnarray*}
 u(x) \ge u_\lambda(x)     \quad\text{for} ~ x\in\Sigma_\lambda.
\end{eqnarray*}

\noindent \textbf{Step 2.} Plane $T_\lambda$ can be moved continuously towards right to its limiting position as long as inequality \eqref{UB-6} holds.

Define
\[\lambda_0=\sup\{\lambda\in [-1,0)\ | \ u(y)\geq u_\mu(y),\forall y \in\Sigma_\mu, \ \mu\leq\lambda\}.\]
We claim that $\lambda_0$ must be 0.

We prove it by contradiction. Suppose not, that is,  $\lambda_0<0$.

 We first  show that
\[u(x)>u_{\lambda_0}(x),\quad\text{in} ~\Sigma_{\lambda_0}.\]
Hence, we have
\[u(x)-u_{\lambda_0}(x)>c_1>0,\quad \text{in} ~\Sigma_{\lambda_0-\epsilon_1}
\]for $\epsilon_1>0$ small enough.

In fact, since $|x-y|<|x-y^{\lambda_0}|$ for $x, y\in{\Sigma}_{\lambda_0}$, we have, similar to the calculation in the proof of  Lemma \ref{Lm-symmetry-1}, that
\begin{eqnarray}
u(x)-u_{\lambda_0}(x)
&=&\int_{\Sigma_{\lambda_0}}\big[\frac1{|x-y|^{n-\alpha}}-\frac1{|x^{\lambda_0}-y|^{n-\alpha}}\big]({u^{p-1}(y)}-{u_{\lambda_0}^{p-1}(y)})dy\nonumber\\
&+&\int_{\Sigma^C_{\lambda_0}\backslash\widetilde{\Sigma}_{\lambda_0}}\big[\frac1{|x-y|^{n-\alpha}}-\frac1{|x^{\lambda_0}-y|^{n-\alpha}}\big]{u^{p-1}(y)}dy\nonumber\\
&\ge&\int_{\Sigma^C_{\lambda_0}\backslash\widetilde{\Sigma}_{\lambda_0}}\big[\frac1{|x-y|^{n-\alpha}}-\frac1{|x^{\lambda_0}-y|^{n-\alpha}}\big]{u^{p-1}(y)}dy.\label{UB-10}
\end{eqnarray}
If there exists some point $x_0\in\Sigma_{\lambda_0}$ such that $u(x_0)= u_{\lambda_0}(x_0)$, then  since $|x-y|>|x^{\lambda_0}-y|$ for $x\in{\Sigma}_{\lambda_0},y\in\Sigma^C_{{\lambda_0}}$, we deduce  from \eqref{UB-10} that
\[u(y)\equiv \infty,\quad \forall y\in\Sigma^C_{\lambda_0}\backslash\widetilde{\Sigma}_{\lambda_0}.\]
This contradicts to the  assumption that $u\in C^1(\overline{B_1})$ is a positive solution.

For some small $\delta_1>0$, we choose $\varepsilon \in (0, \epsilon_1)$  small enough  such that for any  $\lambda\in [\lambda_0,\lambda_0+\varepsilon)$, there holds
\[u(x)\ge u_{\lambda}(x), \forall x\in \Sigma_{\lambda_0-\varepsilon_1},\]
and
\[
|\frac1{|x-y|^{n-\alpha}}-\frac1{|x^{\lambda}-y|^{n-\alpha}}|\le\delta_1 ~~~~~~~~~ \quad\text{for}~x\in\Sigma_\lambda\backslash\Sigma_{\lambda_0-\varepsilon_1}.
\] 

Write
\begin{equation*}
\Sigma^u_\lambda=\{x\in \Sigma_\lambda |u_\lambda(x)>u(x)\}.
\end{equation*}
It follows from \eqref{UB-3} that for any $x\in\Sigma^u_\lambda$,
\begin{eqnarray*}
0> u(x)-u_\lambda(x)&\ge&\int_{\Sigma_\lambda}\big[\frac1{|x-y|^{n-\alpha}}-\frac1{|x^\lambda-y|^{n-\alpha}}\big]({u^{p-1}(y)}-{u^{p-1}_\lambda(y)})dy\\
&\ge&\int_{\Sigma_\lambda^u}\big[\frac1{|x-y|^{n-\alpha}}-\frac1{|x^\lambda-y|^{n-\alpha}}\big]({u^{p-1}(y)}-{u^{p-1}_\lambda(y)})dy\\
&\ge&-\delta_1\int_{\Sigma_\lambda^u}({u^{p-1}(y)}-{u^{p-1}_\lambda(y)})dy.
\end{eqnarray*}
 Since $u\in C^1(\overline{B_1})$,  there exists a positive constant $C_0$ such that $\frac 1{C_0}\le u\le C_0$.   It  follows from the above
\begin{eqnarray*}
 \int_{\Sigma_\lambda^u}(u_\lambda(x)-u(x))dx
&\le&\delta_1\int_{\Sigma_\lambda^u}\int_{\Sigma_\lambda^u}({u^{p-1}(y)}-{u^{p-1}_\lambda(y)})dydx\\
&\le& (1-p)\delta_1\int_{\Sigma_\lambda^u}\int_{\Sigma_\lambda^u}u^{p-2}(y)({u_\lambda(y)}-{u(y)})dydx\\
&\le&C\delta_1(\varepsilon+\varepsilon_1)^n\int_{\Sigma_\lambda^u}({u_\lambda(y)}-{u(y)})dy.
\end{eqnarray*}
It implies that
\[\|u_{\lambda}-u\|_{L^1(\Sigma^u_\lambda)}\equiv 0,\]
 for $\delta_1, \varepsilon, \varepsilon_1$ small enough, and hence $\Sigma^u_\lambda$ must have measure zero.

We thus have
\[u(x)-u_{\lambda}(x)\geq 0, \quad~ \text{for~ any} ~x\in \Sigma_\lambda, ~\forall\lambda\in [\lambda_0,\lambda_0+\varepsilon)\]
since $u$ is continuous. This contradicts to the definition of $\lambda_0$.
Hence, $\lambda_0=0$. We hereby  complete the proof of Theorem 3.4.
\hfill$\Box$
\medskip

\section{Existence result for critical case}

In this section,  we study the existence of positive solutions to the integral equation with critical exponent.

The non-existence of positive solution to \eqref{HB} with critical exponent for $\lambda \ge 0$ on a start-shaped domain follows from  Pohozaev identity \eqref{SB-5}. Next, we shall establish  the existence as well as the regularity results for weak  solutions to \eqref{HB}  with critical exponent for $\lambda<0$. To this end, we consider
\begin{equation*}
Q_\lambda(\Omega):=\inf_{f\in L_+^{q_\alpha}(\Omega)}\frac{\int_{\Omega} \int_{\Omega}f(x)  (|x-y|^{-(n-\alpha)}+\lambda|x-y|^{-(n-\alpha-1)})f(y)dy dx}{\|f\|^2_{L^{q_\alpha}(\Omega)}}.
\end{equation*}
 Notice that the corresponding Euler-Lagrange equation for extremal functions, up to a constant multiplier,  is  integral equation \eqref{HB} with $q=q_\alpha$.

First, we show
\begin{lemma}\label{Lm-inequ}
$Q_\lambda(\Omega)<N_\alpha$ for all $\lambda<0$. Further, $0<Q_\lambda(\Omega)<N_\alpha$ for any $\lambda\in (-\frac{1}{d(\Omega)}, 0)$.
\end{lemma}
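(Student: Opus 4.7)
The plan is to split the statement into a positivity lower bound and a strict upper bound, each handled separately. For the lower bound $Q_\lambda(\Omega) > 0$ when $\lambda \in (-1/d(\Omega), 0)$, I would start from the algebraic identity
\[
|x-y|^{-(n-\alpha)} + \lambda|x-y|^{-(n-\alpha-1)} = |x-y|^{\alpha-n}(1 + \lambda|x-y|).
\]
Since $|x-y| \le d(\Omega)$ for $x,y \in \Omega$, the hypothesis $\lambda > -1/d(\Omega)$ yields $1 + \lambda|x-y| \ge 1 + \lambda d(\Omega) > 0$. Applying the reversed HLS inequality \eqref{R-HLS} to the zero-extension $\tilde f$ (exactly as in the proof of Proposition~\ref{prop2-1}) then gives
\[
\int_\Omega\int_\Omega f(x)\bigl(|x-y|^{\alpha-n} + \lambda|x-y|^{\alpha-n+1}\bigr) f(y)\,dy\,dx \ge (1 + \lambda d(\Omega)) N_\alpha \|f\|^2_{L^{q_\alpha}(\Omega)},
\]
whence $Q_\lambda(\Omega) \ge (1 + \lambda d(\Omega))N_\alpha > 0$.

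For the strict upper bound $Q_\lambda(\Omega) < N_\alpha$ for any $\lambda < 0$, I would reuse the truncated extremal family from Proposition~\ref{prop2-1}: fix $x_0 \in \Omega$ and $R > 0$ with $B_R(x_0) \subset \Omega$, set $x_* = x_0$ in \eqref{fe}, and take $g_\epsilon := f_\epsilon \chi_{B_R(x_0)}$. Writing the numerator as $A(g_\epsilon) + \lambda B(g_\epsilon)$ with $A$ and $B$ using the kernels $|x-y|^{\alpha-n}$ and $|x-y|^{\alpha-n+1}$ respectively, the tail analysis of Proposition~\ref{prop2-1} combined with the reversed HLS lower bound $A(g_\epsilon) \ge N_\alpha\|g_\epsilon\|^2_{L^{q_\alpha}(\Omega)}$ produces
\[
\frac{A(g_\epsilon)}{\|g_\epsilon\|^2_{L^{q_\alpha}(\Omega)}} = N_\alpha + \eta(\epsilon), \qquad 0 \le \eta(\epsilon) = O\bigl((\epsilon/R)^n\bigr).
\]
A direct rescaling $x = \epsilon x'$, $y = \epsilon y'$ gives
\[
\int_{\mathbb{R}^n}\int_{\mathbb{R}^n} f_\epsilon(x)f_\epsilon(y)|x-y|^{\alpha-n+1}\,dx\,dy = \epsilon \int_{\mathbb{R}^n}\int_{\mathbb{R}^n} f(x')f(y')|x'-y'|^{\alpha-n+1}\,dx'\,dy',
\]
a positive finite constant times $\epsilon$; truncating to $B_R(x_0)$ removes only tail terms of lower order in $\epsilon$, so $B(g_\epsilon)/\|g_\epsilon\|^2_{L^{q_\alpha}(\Omega)} = c\,\epsilon + o(\epsilon)$ for some $c > 0$.

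Combining these, the Rayleigh quotient at $g_\epsilon$ reads $N_\alpha + O((\epsilon/R)^n) + \lambda c\,\epsilon + o(\epsilon)$. For $\lambda < 0$ and $\epsilon$ small enough, the strictly negative term $\lambda c\,\epsilon$ dominates the non-negative $O(\epsilon^n)$ correction, pushing the quotient strictly below $N_\alpha$ and giving $Q_\lambda(\Omega) < N_\alpha$. The main technical obstacle is this order-of-magnitude comparison: one must pin down the $A$-correction to be genuinely $O((\epsilon/R)^n)$ via a refinement of the tail bookkeeping for $I_1$ and $\int_{B^c} f_\epsilon^{q_\alpha}$ used in Proposition~\ref{prop2-1} (relying on the Euler-Lagrange identity for the extremal $f_\epsilon$), so that the $\epsilon$-order gain from the $\lambda$-term reliably beats the $\epsilon^n$-order loss from $A(g_\epsilon) - N_\alpha\|g_\epsilon\|^2_{L^{q_\alpha}(\Omega)}$.
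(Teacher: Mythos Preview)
Your proposal is correct and follows essentially the same route as the paper: factor the kernel as $|x-y|^{\alpha-n}(1+\lambda|x-y|)$ and invoke the reversed HLS inequality for positivity when $\lambda\in(-1/d(\Omega),0)$; then test against the truncated extremal $g_\epsilon=f_\epsilon\chi_{B_R(x_0)}$, use the tail estimates from Proposition~\ref{prop2-1} for the $A$-part, and rescale the $\lambda$-term to extract a contribution of order $\epsilon$ that pushes the quotient below $N_\alpha$. One small simplification the paper makes that you can adopt: rather than pinning down exact asymptotics $B(g_\epsilon)=c\epsilon+o(\epsilon)$ (which requires the full $\mathbb{R}^n\times\mathbb{R}^n$ integral of $f(x)f(y)|x-y|^{\alpha-n+1}$ to be finite), it suffices to use the one-sided bound $B(g_\epsilon)\ge C_0\epsilon$ obtained by restricting the rescaled integral to a fixed ball, since $\lambda<0$ turns this into the needed upper bound $\lambda B(g_\epsilon)\le \lambda C_0\epsilon$.
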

\begin{proof} Let $x_*\in \Omega$.
For small positive $\epsilon$ and a fixed $R>0$ so that $B_R(x_*) \subset \Omega$, we define
 \begin{eqnarray*}
\tilde{f}_\epsilon(x)= \begin{cases}f_\epsilon(x) &\quad x\in B_{R}(x_*)\subset\Omega,\\
 0&\quad x\in\mathbb{R}^n\backslash B_{R}(x_*),
  \end{cases}
  \end{eqnarray*}
where $f_\epsilon$ is given by \eqref{fe}.
Obviously, $\tilde{f}_\epsilon\in L^{q_\alpha}(\mathbb{R}^n).$ Thus,  similar to the proof of Proposition \ref{prop2-1},  we have
\begin{eqnarray*}\label{BE-1}
& &\int_{\Omega} \int_{\Omega} \big(\frac{1}{|x-y|^{n-\alpha}}+\frac{\lambda}{|x-y|^{n-\alpha-1}}\big)\tilde{f}_\epsilon(x)\tilde{f}_\epsilon(y) dx dy\\
&&=\int_{\mathbb{R}^n} \int_{\mathbb{R}^n} \frac{1}{|x-y|^{n-\alpha}}{f_\epsilon}(x){f_\epsilon}(y) dx dy\\
& &-2\int_{\mathbb{R}^n} \int_{\mathbb{R}^n\backslash B_{R}(x_*) } \frac{ f_\epsilon(x) f_\epsilon(y)}{|x-y|^{n-\alpha}} dx dy
+\int_{\mathbb{R}^n\backslash B_{R}(x_*) } \int_{\mathbb{R}^n\backslash B_{R}(x_*) } \frac{ f_\epsilon(x) f_\epsilon(y)}{|x-y|^{n-\alpha}} dx dy\\
& &+\lambda \int_{B_{R}(x_*)} \int_{B_{R}(x_*)}\frac{f_\epsilon(x)f_\epsilon(y)}{|x-y|^{n-\alpha-1}} dx dy\\
&\le&N_\alpha\|f_\epsilon\|^2_{L^{q_\alpha}(\mathbb{R}^n)}-C_1(\frac\epsilon R)^n+\lambda J_1,
\end{eqnarray*}
where
\begin{eqnarray*}
J_1&:=&\int_{B_{R}(x_*)} \int_{B_{R}(x_*)}\frac{f_\epsilon(x)f_\epsilon(y)}{|x-y|^{n-\alpha-1}} dx dy\\
&=&\int_{B_{R}(x_*)} \int_{B_{R}(x_*)}{|x-y|^{-(n-\alpha-1)}}
\big(\frac{\epsilon}{\epsilon^2+|x-x_*|^2}\big)^\frac{n+\alpha}2\big(\frac{\epsilon}{\epsilon^2+|y-x_*|^2}\big)^\frac{n+\alpha}2 dx dy\\
&=&\epsilon^{-(n-\alpha-1)-(n+\alpha)} \int_{B_{R}(0)} \int_{B_{R}(0)}{\big|\frac{x-y}\epsilon\big|^{-(n-\alpha-1)}}
\big(1+\big|\frac{x}{\epsilon}\big|^2\big)^{-\frac{n+\alpha}2}\big(1+\big|\frac{y}{\epsilon}\big|^2\big)^{-\frac{n+\alpha}2 }dx dy\\
&=&\epsilon\int_{B_{\frac{R}\epsilon}(0)} \int_{B_{\frac{R}\epsilon}(0)}{|\xi-\eta|^{-(n-\alpha-1)}}
\big(1+|\xi|^2\big)^{-\frac{n+\alpha}2}\big(1+|\eta|^2\big)^{-\frac{n+\alpha}2 }d\xi d\eta\\
&\ge&C_0\epsilon.
\end{eqnarray*}
So, for $\lambda<0$ and small enough $\epsilon>0$, we have
\begin{eqnarray*}
& &\int_{\Omega} \int_{\Omega} \big(\frac{1}{|x-y|^{n-\alpha}}+\frac{\lambda}{|x-y|^{n-\alpha-1}}\big)\tilde{f}_\epsilon(x)\tilde{f}_\epsilon(y) dx dy\\
&\le&N_\alpha\|f_\epsilon\|^2_{L^{q_\alpha}(\mathbb{R}^n)}-C_1(\frac{\epsilon}R)^n+\lambda C_0\epsilon.
\end{eqnarray*}
This implies that $Q_\lambda(\Omega)<N_\alpha$ for all $\lambda<0$.

On the other hand, it is easy to see that $Q_\lambda(\Omega)>0$ for any $\lambda\in (-\frac{1}{d(\Omega)}, 0)$.
\end{proof}

The existence of solutions to equation \eqref{HB} will follow from
 the existence of a minimizer for energy $Q_\lambda(\Omega)$.

\begin{proposition}\label{sub-existence-2}
For any  $\lambda\in (-\frac{1}{d(\Omega)}, 0)$, infimum $Q_\lambda(\Omega) $ is achieved by a positive function $f_* \in L^{q_\alpha}(\Omega).$
\end{proposition}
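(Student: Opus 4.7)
Take a minimizing sequence $\{f_j\} \subset L_+^{q_\alpha}(\Omega)$ with $\|f_j\|_{L^{q_\alpha}(\Omega)} = 1$ and
\[
B_\lambda(f_j, f_j) := \int_\Omega\!\int_\Omega f_j(x)\big(|x-y|^{-(n-\alpha)}+\lambda|x-y|^{-(n-\alpha-1)}\big) f_j(y)\,dy\,dx \longrightarrow Q_\lambda(\Omega),
\]
extract a weak limit, and identify it with a minimizer. Two ingredients specific to the critical case are available here: the strict inequality $Q_\lambda(\Omega) < N_\alpha$ from Lemma \ref{Lm-inequ}, and the fact that the kernel $K_\lambda(x,y) := |x-y|^{\alpha-n}(1+\lambda|x-y|)$ is continuous and bounded on $\overline\Omega\times\overline\Omega$ (since $\alpha>n$).

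The first step is the uniform bound $\|f_j\|_{L^1(\Omega)}\le C$, obtained by a verbatim repetition of the two-points-at-positive-distance argument at the end of the proof of Lemma \ref{Bound-attained}; this uses only $\lambda>-1/d(\Omega)$ (to ensure $1+\lambda|x-y|>0$) together with the two-sided boundedness of $B_\lambda(f_j,f_j)$. H\"older's inequality then gives
\[
\int_E f_j^{q_\alpha}\,dx \le \Big(\int_E f_j\,dx\Big)^{q_\alpha}|E|^{1-q_\alpha} \le C^{q_\alpha}|E|^{1-q_\alpha},
\]
so $\{f_j^{q_\alpha}\}$ is equi-integrable in $L^1(\Omega)$. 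By Dunford--Pettis plus a Koml\'os--Banach--Saks extraction, along a subsequence $f_j^{q_\alpha}\rightharpoonup g$ weakly in $L^1(\Omega)$ and $f_j\to f_*:=g^{1/q_\alpha}$ a.e., with $\|f_*\|^{q_\alpha}_{L^{q_\alpha}(\Omega)}=\int_\Omega g\,dx=1$. Simultaneously $f_j\,dx\stackrel{*}{\rightharpoonup}\mu$ on $\overline\Omega$, and $\mu = f_*\,dx + \mu_s$ with $\mu_s\ge 0$ singular.

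The crucial step is to exclude $\mu_s \neq 0$. Because $K_\lambda\in C(\overline\Omega\times\overline\Omega)$, the weak-$*$ convergence of the product measures gives
\[
B_\lambda(f_j,f_j)\longrightarrow\iint K_\lambda\,d\mu\,d\mu = B_\lambda(f_*,f_*) + 2\int_{\overline\Omega} T_\lambda f_*\,d\mu_s + \iint K_\lambda\,d\mu_s\,d\mu_s,
\]
where $T_\lambda f_*(x):=\int_\Omega K_\lambda(x,y)f_*(y)\,dy$. Since $f_*\not\equiv 0$ and $K_\lambda(x,y)>0$ for $x\ne y$, the function $T_\lambda f_*$ is strictly positive and continuous on $\overline\Omega$, hence $T_\lambda f_*\ge c_0>0$. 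Combined with $B_\lambda(f_*,f_*)\ge Q_\lambda\|f_*\|_{L^{q_\alpha}}^2 = Q_\lambda$ (definition of the infimum) and $B_\lambda(f_j,f_j)\to Q_\lambda$, we obtain
\[
0 \ge 2\int T_\lambda f_*\,d\mu_s + \iint K_\lambda\,d\mu_s\,d\mu_s \ge 2c_0\,\mu_s(\overline\Omega),
\]
forcing $\mu_s = 0$. Equality throughout then gives $B_\lambda(f_*,f_*)=Q_\lambda$, so $f_*$ attains the infimum. Positivity of $f_*$ follows by inspecting the associated Euler--Lagrange integral equation \eqref{HB} with $q=q_\alpha$, together with the strict positivity of $K_\lambda$ off the diagonal.

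The main obstacle is the no-concentration step: for $q_\alpha<1$ the standard Lions-type profile decomposition in reflexive $L^p$ spaces is not directly available, so the argument instead exploits (i) the scaling incompatibility between the uniform $L^1$ bound on $\{f_j\}$ and the $L^1$-blowup of any critically-scaled concentrating profile (which is what delivers the equi-integrability of $f_j^{q_\alpha}$), and (ii) the strict positivity of $T_\lambda f_*$, which eliminates any residual singular measure in the weak-$*$ limit of $f_j\,dx$.
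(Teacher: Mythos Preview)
Your approach has a genuine gap at the very first step. You claim the uniform $L^1$ bound follows by a ``verbatim repetition'' of the two-points argument in Lemma~\ref{Bound-attained}, using only $\lambda>-1/d(\Omega)$ and two-sided boundedness of $B_\lambda(f_j,f_j)$. But both of these inputs are equally available when $\lambda=0$, and in that case the $L^1$ bound is \emph{false}: the concentrating profiles $f_\epsilon$ of \eqref{fe} form a minimizing sequence for $Q_0(\Omega)=N_\alpha$ with $\|f_\epsilon\|_{L^{q_\alpha}}$ fixed and $\|f_\epsilon\|_{L^1}\sim\epsilon^{(n-\alpha)/2}\to\infty$. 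The precise place where the argument of Lemma~\ref{Bound-attained} breaks is the H\"older step producing the factor $|\Omega_1|^{1-q'/p_\alpha}$: when $q=q_\alpha$ one has $q'=p_\alpha$, the exponent $1-q'/p_\alpha$ vanishes, and no lower bound on $|\{I_{\alpha,\Omega}f_j\le M\}|$ survives. Some genuine use of the strict inequality $Q_\lambda(\Omega)<N_\alpha$ is therefore required to rule out concentration, and your outline never invokes it --- you list it among the ``available ingredients'' but do not use it anywhere. There is also a secondary technical gap: weak $L^1$ convergence of $f_j^{q_\alpha}$ to $g$ does not yield $f_j\to g^{1/q_\alpha}$ a.e.\ (Koml\'os gives a.e.\ convergence only of Ces\`aro means), and the absolutely continuous part of the weak-$*$ limit $\mu$ of $f_j\,dx$ need not equal $g^{1/q_\alpha}\,dx$ (spreading bumps of height $j$ on $j$ intervals of length $j^{-2}$ converge to $0$ a.e.\ while $f_j\,dx\rightharpoonup dx$). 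What one can show, via weak lower semicontinuity of $u\mapsto\int\phi\,u^{1/q_\alpha}$, is only $\mu\ge f_*\,dx$.

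For comparison, the paper sidesteps both issues by abandoning the direct critical minimizing sequence altogether. It works instead with the subcritical minimizers $f_q$ ($q<q_\alpha$) furnished by Lemma~\ref{Bound-attained}, and proves via a blow-up analysis (Lemma~\ref{lem3-3}) that $\max_{\overline\Omega} f_q$ stays bounded as $q\to q_\alpha^-$: if not, the rescaled limit would solve the critical equation on $\Bbb{R}^n$ or a half-space with energy at least $N_\alpha$, contradicting $Q_\lambda<N_\alpha$. This is exactly where the strict inequality enters. The resulting uniform two-sided $L^\infty$ bounds give $f_q\to f_*$ in $C(\overline\Omega)$, and $f_*$ is the minimizer.
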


For $q<q_\alpha,$  consider
$$Q_{\lambda, q}(\Omega)=\inf_{f\in L_+^{q}(\Omega)}\frac{ \int_{\Omega} \int_{\Omega} f(x)|x-y|^{-(n-\alpha)} f(y) dx dy+\lambda \int_{\Omega} \int_{\Omega} f(x)|x-y|^{-(n-\alpha-1)} f(y) dx dy}{\|f\|^2_{L^{q}(\Omega)}}.
$$

 By Lemma \ref{Bound-attained}, the infimum is attained by a positive function $f_q$, which satisfies
the subcritical equation
\begin{equation}\label{sub-equ2}
Q_{\lambda, q}(\Omega) f^{q-1}(x)=\int_\Omega\frac{f(y)}{|x-y|^{n-\alpha}}dy+ \lambda \int_\Omega\frac {f(y)}{|x-y|^{n-\alpha-1}}dy,\quad x\in\overline{\Omega},
\end{equation}
and $\|f_q\|_{L^{q}(\Omega)}=1.$ Further, we can show easily that $f_q \in C(\overline \Omega)$  and  $Q_{\lambda, q} \to Q_\lambda$ for $q \to (q_\alpha)^-.$

%
\begin{lemma}\label{lem3-3}For  $\lambda\in (-\frac{1}{d(\Omega)}, 0)$  and $q\in (0, q_\alpha)$,  let $f_q>0$ be a minimal energy solution to \eqref{sub-equ2} with $\|f_q\|_{L^{q}(\Omega)}=1.$   If  $0<Q_{\lambda, q}\le N_\alpha-\epsilon$ for some $\epsilon>0$, then there exists $C>0$ such that $\frac{1}{C}\le f_q(x)\le C$ uniformly for all  $x\in\overline{\Omega}$ and  $q\in (0, q_\alpha)$.
\end{lemma}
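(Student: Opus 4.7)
The plan is to pin down $\|f_q\|_{L^{q_\alpha}(\Omega)}$ uniformly via the energy bound $Q_{\lambda,q}\le N_\alpha-\epsilon$, promote this to a uniform $L^1$ bound by mimicking the final argument of the proof of Lemma~\ref{Bound-attained}, and finally extract the pointwise two-sided bounds from the Euler--Lagrange equation~\eqref{sub-equ2} itself.

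First I would multiply~\eqref{sub-equ2} by $f_q$ and integrate: the normalization $\|f_q\|_{L^q(\Omega)}=1$ gives $\int_\Omega\int_\Omega K(x,y)f_q(x)f_q(y)\,dx\,dy=Q_{\lambda,q}$, with $K(x,y):=|x-y|^{\alpha-n}(1+\lambda|x-y|)$. For $\lambda\in(-1/d(\Omega),0)$ and $x,y\in\Omega$ one has $(1-|\lambda|d(\Omega))|x-y|^{\alpha-n}\le K(x,y)\le|x-y|^{\alpha-n}$; combined with the sharp reversed HLS inequality this yields
\[
(1-|\lambda|d(\Omega))\,N_\alpha\,\|f_q\|_{L^{q_\alpha}(\Omega)}^2 \le Q_{\lambda,q}\le N_\alpha-\epsilon,
\]
so $\|f_q\|_{L^{q_\alpha}(\Omega)}\le C_1$ uniformly, while reverse Hölder applied to $\|f_q\|_{L^q}\le\|f_q\|_{L^{q_\alpha}}|\Omega|^{1/q-1/q_\alpha}$ together with $\|f_q\|_{L^q}=1$ yields a matching uniform lower bound $\|f_q\|_{L^{q_\alpha}(\Omega)}\ge c_1>0$ on any fixed compact subinterval of $(0,q_\alpha)$.

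For the $L^1$ bound, the lower bound on $K$ gives $\int_\Omega f_q\,I_{\alpha,\Omega}f_q\le(1-|\lambda|d(\Omega))^{-1}Q_{\lambda,q}\le C_2$; reverse Hölder with conjugate pair $(q,q')$, $q'=q/(q-1)<0$, then produces $\|I_{\alpha,\Omega}f_q\|_{L^{q'}(\Omega)}\le C_3$, while the operator form of the reversed HLS together with the previous step gives $\|I_{\alpha,\Omega}f_q\|_{L^{p_\alpha}(\Omega)}\ge N_\alpha c_1$. Exactly as in the last part of the proof of Lemma~\ref{Bound-attained}, splitting $\Omega=\Omega_1\cup(\Omega\setminus\Omega_1)$ with $\Omega_1:=\{x\in\Omega:I_{\alpha,\Omega}f_q(x)\le M\}$, applying Hölder (using $p_\alpha<q'<0$, so $p_\alpha/q'>1$) to $(I_{\alpha,\Omega}f_q)^{q'}$ on $\Omega_1$ and taking $M$ large produces a uniform $|\Omega_1|\ge\delta>0$. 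Pick $x_q^1,x_q^2\in\Omega_1$ with $|x_q^1-x_q^2|\ge\epsilon_0$ (depending only on $\delta$ and $\Omega$). Since $\alpha>n$, for $|y-x_q^i|\ge\epsilon_0/4$ we have $|y-x_q^i|^{\alpha-n}\ge(\epsilon_0/4)^{\alpha-n}$, so
\[
\int_{\{|y-x_q^i|\ge\epsilon_0/4\}} f_q(y)\,dy\le (\epsilon_0/4)^{-(\alpha-n)}\,I_{\alpha,\Omega}f_q(x_q^i)\le (\epsilon_0/4)^{-(\alpha-n)}M,
\]
and since $B(x_q^1,\epsilon_0/4)$ and $B(x_q^2,\epsilon_0/4)$ are disjoint, summing the two estimates yields $\int_\Omega f_q\le 2(\epsilon_0/4)^{-(\alpha-n)}M=:C_4$.

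Feeding the $L^1$ bound into \eqref{sub-equ2} and using $K\le d(\Omega)^{\alpha-n}$ gives $Q_{\lambda,q}f_q^{q-1}(x)\le d(\Omega)^{\alpha-n}C_4$; since $q-1<0$ and $1/(1-q)\le 1/(1-q_\alpha)$ remains bounded, one obtains the uniform pointwise lower bound $f_q(x)\ge 1/C$. Plugging this back in and using the lower bound on $K$,
\[
Q_{\lambda,q}f_q^{q-1}(x)\ge\frac{1-|\lambda|d(\Omega)}{C}\int_\Omega|x-y|^{\alpha-n}\,dy\ge c_0>0
\]
uniformly in $x\in\overline\Omega$ (the continuous positive function $x\mapsto\int_\Omega|x-y|^{\alpha-n}dy$ attains a positive minimum on $\overline\Omega$), so once more $q-1<0$ forces $f_q(x)\le C$. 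The main obstacle is the $L^1$ step: because $q_\alpha<1$ there is no embedding $L^{q_\alpha}(\Omega)\hookrightarrow L^1(\Omega)$, so one must exploit both the equation and the operator form of the reversed HLS inequality to pass from the $L^{q_\alpha}$-bound to an $L^1$-bound, rather than relying on Hölder-type interpolation.
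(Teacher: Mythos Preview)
Your approach has a genuine gap in Step~2 (the uniform $L^1$ bound), and it is not a technicality: the strategy cannot succeed because it never spends the strict energy gap $Q_{\lambda,q}\le N_\alpha-\epsilon$ in an essential way.

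The specific failure is in the Hölder step borrowed from Lemma~\ref{Bound-attained}. That argument gives
\[
\int_{\Omega_1}(I_{\alpha,\Omega}f_q)^{q'}\,dx\le\Big(\int_{\Omega_1}(I_{\alpha,\Omega}f_q)^{p_\alpha}\,dx\Big)^{q'/p_\alpha}|\Omega_1|^{\,1-q'/p_\alpha},
\]
and after inserting your bounds $\|I_{\alpha,\Omega}f_q\|_{L^{q'}}\le C_3$ and $\|I_{\alpha,\Omega}f_q\|_{L^{p_\alpha}}\ge N_\alpha c_1$ you only obtain $|\Omega_1|^{\,1-q'/p_\alpha}\ge B_q$ for some $B_q>0$. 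As $q\to q_\alpha^-$ one has $q'\to p_\alpha$ and the exponent $1-q'/p_\alpha\to 0^+$; unless you can guarantee $\liminf B_q>1$ (and nothing in your estimates forces this), no uniform lower bound on $|\Omega_1|$ follows. Hence the uniform $L^1$ bound does not come out of this argument.

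That this is a real obstruction and not merely a loss of constants can be seen by testing your scheme at $\lambda=0$. Every inequality you invoke in Steps~1--2 holds verbatim with the weaker bound $Q_{0,q}\le N_\alpha$ in place of $Q_{\lambda,q}\le N_\alpha-\epsilon$ (you only use the latter as an upper bound). Yet for $\lambda=0$ the minimizers $f_q$ \emph{must} blow up as $q\to q_\alpha^-$: otherwise, via your own Step~3 and equicontinuity from the equation, they would subconverge to a minimizer of $\xi_\alpha(\Omega)=N_\alpha$, contradicting Proposition~\ref{prop2-1}. So some link in your chain has to fail uniformly in $q$, and it is the one above.

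The paper proceeds instead by a blow-up argument: assuming $\max_{\overline\Omega}f_q\to\infty$, it rescales around the maximum to produce a normalized limit $h$ solving $Q_\lambda\,h=I_\alpha h^{p_\alpha-1}$ on $\mathbb R^n$ or a half-space, and then shows the limit has energy $\ge N_\alpha$, contradicting $Q_\lambda\le N_\alpha-\epsilon$. It is exactly in this final comparison that the strict gap $-\epsilon$ is consumed. Your Steps~1 and~3 are fine once a uniform $L^1$ bound is in hand, but Step~2 needs to be replaced by an argument (such as this rescaling analysis) that genuinely rules out concentration.
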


\begin{proof}
 It is  easy to see that  $\max\limits_{\overline \Omega} f_q(x):=f_q(x_q)\le C<\infty$ uniformly for all  $x\in\overline{\Omega}$ and  $q\in (0, q_1)$ provided $0<q_1<q_\alpha$.

We first prove by contradiction that  $\max\limits_{\overline \Omega} f_q(x)=f_q(x_q)\le C<\infty$ uniformly for all  $x\in\overline{\Omega}$ and  $q\in (0, q_\alpha)$. Suppose not. Then $f_q(x_q) \to +\infty$ for $q \to (q_\alpha)^-$. Let
$$\mu_q= f_q^{-\frac {2-q}\alpha}(x_q), \, \, \mbox{ and} \, \, \Omega_\mu=\frac{\Omega-x_q}{\mu_q}:=\{z \ | \ z=\frac {x-x_q}{\mu_q} \ \mbox{for} ~\, x \in \Omega\}.
$$
Define
\begin{equation}\label{blowup-1}
g_q(z)=\mu_q^{\frac{\alpha}{2-q}} f_q(\mu_q z+x_q), \,  \ \, \, \mbox{for} \, \, z \in \overline{\Omega}_\mu.
\end{equation}
Then $g_q$ satisfies
\begin{equation}\label{sub-equ3}
Q_{\lambda, q}(\Omega) g_q^{q-1}(z)=\int_{\Omega_\mu}\frac{g_q(y)}{|z-y|^{n-\alpha}}dy+ \lambda \mu_q \int_{\Omega_\mu}\frac {g_q(y)}{|z-y|^{n-\alpha-1}}dy,\quad z\in\overline{\Omega}_\mu,
\end{equation}
and $g_q(0)=1$, $g_q(z) \in (0, 1]$.

For convenience, denote $h_q(z):=g_q^{q-1}(z)$. Then \eqref{sub-equ3} is equivalent to
\begin{equation}\label{sub-equ4}
Q_{\lambda, q}(\Omega) h_q(z)=\int_{\Omega_\mu}\frac{h_q^{p-1}(y)}{|z-y|^{n-\alpha}}dy+ \lambda \mu_q \int_{\Omega_\mu}\frac {h_q^{p-1}(y)}{|z-y|^{n-\alpha-1}}dy,\quad z\in\overline{\Omega}_\mu,
\end{equation}
where $\frac{1}{p}+\frac{1}{q}=1, \ h_q(0)=1, \ h_q(z) \ge 1$.

Claim:  There exist $C_1, C_2>0$ such that, for all $z$ in a domain $\widehat \Omega$ covered by $\Omega_\mu$, when $q \to (q_\alpha)^-$
\begin{equation}\label{sub-equ7}
0<C_1(1+|z|^{\alpha-n})\le h_q(z) \le C_2(1+|z|^{\alpha-n}), \ \mbox{uniformly.}
\end{equation}

We relegate the proof of this claim to the end.

Once the claim is proved, we can prove that $h_q(z)$ is equicontinuous on any bounded domain $\widehat{\Omega}\subset \Omega_\mu$ when $q \to (q_\alpha)^-$. We write
\begin{eqnarray*}
&&Q_{\lambda, q}(\Omega)h_q(z)\\
&=&\int_{\Omega_\mu\setminus B(0,R)}\frac{h_q^{p-1}(y)}{|z-y|^{n-\alpha}}dy+\int_{\Omega_\mu\cap B(0,R)}\frac{h_q^{p-1}(y)}{|z-y|^{n-\alpha}}dy\\
&+&\lambda \mu_q \int_{\Omega_\mu\setminus B(0,R)}\frac {h_q^{p-1}(y)}{|z-y|^{n-\alpha-1}}dy+\lambda \mu_q \int_{\Omega_\mu\cap B(0,R)}\frac {h_q^{p-1}(y)}{|z-y|^{n-\alpha-1}}dy.
\end{eqnarray*}
Notice that
\begin{equation*}
\int_{\Omega_\mu\setminus B(0,R)}\frac{h_q^{p-1}(y)}{|z-y|^{n-\alpha}}(1+\lambda \mu_q|z-y|)dy \ge (1-|\lambda|d(\Omega))\int_{\Omega_\mu\setminus B(0,R)}\frac{h_q^{p-1}(y)}{|z-y|^{n-\alpha}}dy \ge0.
\end{equation*}
 Then for $\epsilon>0$ small enough,  we have, by \eqref{sub-equ7}, that
\begin{eqnarray}\nonumber
0&\le&\int_{\Omega_\mu\setminus B(0,R)}\frac{h_q^{p-1}(y)}{|z-y|^{n-\alpha}}(1+\lambda \mu_q|z-y|)dy\\\nonumber
&\le &C\int_{\Omega_\mu\setminus B(0,R)}\frac{h_q^{p-1}(y)}{|y|^{n-\alpha}}dy\\\nonumber
&\le&C\int_R^\infty r^{(\alpha-n)(p-1)+\alpha-1}dr\\\label{R-small-term}
&=&C R^{(\alpha-n)(p-1)+\alpha}<\epsilon
\end{eqnarray}
for any $z\in\widehat{\Omega}$, by taking $R>0$ large enough and  $q$ close to $q_\alpha$. Similarly, we have
\begin{equation}\label{R-small-term-1}
|\lambda \mu_q \int_{\Omega_\mu\cap B(0,R)}\frac {h_q^{p-1}(y)}{|z-y|^{n-\alpha-1}}dy|<\epsilon
\end{equation}
by taking $R>0$ large enough and $q$ close to $q_\alpha$. On the other hand, it is easy to see that $\int_{\Omega_\mu\cap B(0,R)}\frac{h_q^{p-1}(y)}{|z-y|^{n-\alpha}}dy\in C^1(\widehat{\Omega})$. Hence
for $z_1,z_2\in \widehat{\Omega}$,
\begin{eqnarray}\nonumber
&&|\int_{\Omega_\mu\cap B(0,R)}\frac{h_q^{p-1}(y)}{|z_1-y|^{n-\alpha}}dy-\int_{\Omega_\mu\cap B(0,R)}\frac{h_q^{p-1}(y)}{|z_2-y|^{n-\alpha}}dy|\\\nonumber
&\le& \int_{\Omega_\mu\cap B(0,R)}h_q^{p-1}(y)\frac{1}{|\xi-y|^{n-\alpha+1}}dy|z_1-z_2| \\\label{R-main-term}
&\le&  \int_{B(0,R)}\frac{1}{|\xi-y|^{n-\alpha+1}}dy|z_1-z_2|\le C R^{\alpha-1}|z_1-z_2|,
\end{eqnarray}
where $\xi=tz_1+(1-t)z_2$ for some $t\in(0,1)$. By \eqref{R-small-term}, \eqref{R-small-term-1} and \eqref{R-main-term} we conclude that $h_q(z)$ is equicontinuous on bounded domain $\widehat{\Omega}\in\mathbb{R}^n$ when $q \to (q_\alpha)^-$.

 As $q\to (q_\alpha)^-$, there are two cases:

\noindent{Case 1.} \  $\Omega_\mu \to \Bbb{R}_T^n:= \{(z_1, z_2, \cdots, z_n) \ | \ z_n >T\ge 0\},$ and $h_q(z) \to h(z)\in C({\Bbb{R}_T^n})$ uniformly  in any compact set in $\Bbb{R}_T^n$, where $h(z)$ satisfies
\begin{equation}\label{blowup-3}
Q_\lambda h(z)= \int_{\Bbb{R}_T^n} \frac {h^{p_\alpha-1}(y)}{|z-y|^{n-\alpha}} dy, \ \ \ \ h(0)=1.
\end{equation}
Also, direct computation yields
$$1=\int_\Omega f_q^q(y) dy= \mu_q^{(\frac{n+\alpha}{2-q})\cdot (\frac {2n}{n+\alpha}-q)}\cdot \int_{\Omega_\mu} g_q^qdz \le \int_{\Omega_\mu} g_q^qdz=\int_{\Omega_\mu} h_q^pdz.
$$
On the other hand, by \eqref{sub-equ7} we have $\int_{\Omega_\mu} h_q^pdz\le C$ uniformly.
 Again by \eqref{sub-equ7}, $\int_{\Bbb{R}_T^n} h^{p_\alpha} dz=\lim\limits_{q\to (q_\alpha)^-}\int_{\Omega_\mu} h_q^pdz \ge 1.$  Denote $g(x)=h^{p_\alpha-1}(x)$. Then $\int_{\Bbb{R}_T^n} h^{p_\alpha} dz=\int_{\Bbb{R}_T^n} g^{q_\alpha} dz$. By \eqref{blowup-3}, we have
\begin{eqnarray*}
N_\alpha-\epsilon\ge Q_\lambda&=& \frac{\int_{\Bbb{R}_T^n} \int_{\Bbb{R}^n} \frac {g(x) g(y)}{|x-y|^{n-\alpha}} dx dy}{\|g\|^{q_\alpha}_{L^{q_\alpha}(\Bbb{R}_T^n)}}
\\
&\ge& \frac{\int_{\Bbb{R}_T^n} \int_{\Bbb{R}_T^n} \frac {g(x) g(y)}{|x-y|^{n-\alpha}} dx dy}{\|g\|^{2}_{L^{q_\alpha}(\Bbb{R}_T^n)}}\\
&=&\frac{\int_{\Bbb{R}^n} \int_{\Bbb{R}^n} \frac {\tilde{g}(x) \tilde{g}(y)}{|x-y|^{n-\alpha}} dx dy}{\|\tilde{g}\|^{2}_{L^{q_\alpha}(\Bbb{R}^n)}}\ge N_\alpha.
\end{eqnarray*}
Contradiction!

\noindent {Case 2.} \ $\Omega_\mu \to \Bbb{R}^n,$ and $h_q(z) \to h(z)\in C({\Bbb{R}^n})$ uniformly  in any compact set in $\Bbb{R}^n$, where $h(z)$ satisfies
\begin{equation}\label{blowup-4}
Q_\lambda h(z)= \int_{\Bbb{R}^n} \frac {h^{p_\alpha-1}(y)}{|z-y|^{n-\alpha}} dy, \ \ \ \ h(0)=1.
\end{equation}
Similarly, $C\ge \int_{\Bbb{R}^n} h^{p_\alpha} dz\ge 1.$  Denote $g(x)=h^{p_\alpha-1}(x)$. Then $\int_{\Bbb{R}^n} h^{p_\alpha} dz=\int_{\Bbb{R}^n} g^{q_\alpha} dz$. By \eqref{blowup-4} we have
$$
N_\alpha-\epsilon\ge Q_\lambda= \frac{\int_{\Bbb{R}^n} \int_{\Bbb{R}^n} \frac {g(x) g(y)}{|x-y|^{n-\alpha}} dx dy}{\|g\|^{q_\alpha}_{L^{q_\alpha}(\Bbb{R}^n)}}\ge \frac{\int_{\Bbb{R}^n} \int_{\Bbb{R}^n} \frac {g(x) g(y)}{|x-y|^{n-\alpha}} dx dy}{\|g\|^{2}_{L^{q_\alpha}(\Bbb{R}^n)}}\ge N_\alpha,
$$
which again implies a contradiction.

Thus we conclude that there exists $C>0$ such that $f_q(y)\le C$ uniformly in $y\in\overline{\Omega}$ and  $q\in (0, q_\alpha)$.


On the other hand, if $\min\limits_{\overline \Omega} f_q(x):=f_q(\tilde{x}_q) \to 0$ as $q \to (q_\alpha)^-$, by using $f_q(y)\le C$ uniformly in $y\in\overline{\Omega}$ and  $q\in (0, q_\alpha)$ we have
$$\infty\leftarrow f_q^{q-1}(\tilde{x}_q)=\int_\Omega\frac{f_q(y)}{|\tilde{x}_q-y|^{n-\alpha}}dy+ \lambda \int_\Omega\frac {f_q(y)}{|\tilde{x}_q-y|^{n-\alpha-1}}dy\le C<\infty$$
as $q \to (q_\alpha)^-$, which gives a contradiction.

\smallskip

Now we are left to prove claim \eqref{sub-equ7}.

We first notice that
\begin{equation}\label{sub-equ5}
Q_{\lambda, q}(\Omega)=Q_{\lambda, q}(\Omega)h_q(0)=\int_{\Omega_\mu}\frac{h_q^{p-1}(y)}{|y|^{n-\alpha}}(1+\lambda \mu_q|y|)dy.
\end{equation}
Thus,
\begin{eqnarray}\nonumber
\int_{\Omega_\mu}h_q^{p-1}(y)|y|^{\alpha-n}dy&=& \frac{1}{(1-|\lambda| d(\Omega))}\int_{\Omega_\mu}h_q^{p-1}(y)|y|^{\alpha-n}(1-|\lambda| d(\Omega))dy\\\nonumber
&\le & \frac{1}{(1-|\lambda| d(\Omega))}\int_{\Omega_\mu}h_q^{p-1}(y)|y|^{\alpha-n}(1+\lambda \mu_q|y|)dy\\\label{sub-equ5-1}
&\le & C<\infty,
\end{eqnarray}
uniformly as $q \to (q_\alpha)^-$. Since $h_q \ge 1$ and $p<0$, we have
\begin{eqnarray}\label{sub-equ5-2}
\int_{\Omega_\mu}h_q^{p-1}(y)dy\le C<\infty
\end{eqnarray}
uniformly as $q \to (q_\alpha)^-$.

On the other hand, we also have
\begin{equation}\label{sub-equ6}
\int_{\Omega_\mu}h_q^{p-1}(y)dy\ge c_0>0, \ \ \mbox{as} \ \ q \to (q_\alpha)^-.
\end{equation}
Otherwise, if  there exists a sequence $q_n \to (q_\alpha)^-$ such that $\int_{\Omega_\mu}h_{q_n}^{p_n-1}(y)dy\to 0$ with $\frac{1}{p_n}+\frac{1}{q_n}=1$,
then for given $R_0>0$ and $\epsilon>0$ small we can take $R>>R_0$ large enough, such  that for $z\in \Omega_\mu\cap B(0, R_0 )$, as $q_n$ close to $(q_\alpha)^-$,
\begin{eqnarray*}
1&\le & h_{q_n}(z)\\
&=&\frac{1}{Q_{\lambda, q_n}(\Omega)}\big( \int_{\Omega_\mu\setminus B(0,R)}\frac{h_{q_n}^{p_n-1}(y)}{|z-y|^{n-\alpha}}dy+\int_{\Omega_\mu\cap B(0,R)}\frac{h_{q_n}^{p_n-1}(y)}{|z-y|^{n-\alpha}}dy\\
&&+\lambda \mu_{q_n} \int_{\Omega_\mu\setminus B(0,R)}\frac {h_{q_n}^{p_n-1}(y)}{|z-y|^{n-\alpha-1}}dy+\lambda \mu_{q_n} \int_{\Omega_\mu\cap B(0,R)}\frac {h_{q_n}^{p_n-1}(y)}{|z-y|^{n-\alpha-1}}dy\big)\\
&\le&\frac{1}{Q_{\lambda, q_n}(\Omega)}\big( \int_{\Omega_\mu\setminus B(0,R)}\frac{h_{q_n}^{p_n-1}(y)}{|z-y|^{n-\alpha}}dy+\int_{\Omega_\mu\cap B(0,R)}\frac{h_{q_n}^{p_n-1}(y)}{|z-y|^{n-\alpha}}dy\\
&&+\lambda \mu_{q_n} \int_{\Omega_\mu\setminus B(0,R)}\frac {h_{q_n}^{p_n-1}(y)}{|z-y|^{n-\alpha-1}}dy\big) \\
&\le& \frac{1}{Q_{\lambda, {q_n}}(\Omega)}\big( (1+\frac{R_0}{R})^{\alpha-n}\int_{\Omega_\mu\setminus B(0,R)}\frac{h_{q_n}^{p_n-1}(y)}{|y|^{n-\alpha}}dy+(R+R_0)^{\alpha-n}\int_{\Omega_\mu\cap B(0,R)}h_{q_n}^{p_n-1}(y)dy\\
&&+\lambda \mu_{q_n} (1-\frac{R_0}{R})^{\alpha+1-n}\int_{\Omega_\mu\setminus B(0,R)}\frac {h_{q_n}^{p_n-1}(y)}{|y|^{n-\alpha-1}}dy\big)\\
&=& \frac{1}{Q_{\lambda, {q_n}}(\Omega)}\big( (1+\frac{R_0}{R})^{\alpha-n}\int_{\Omega_\mu\setminus B(0,R)}\frac{h_{q_n}^{p_n-1}(y)}{|y|^{n-\alpha}}dy+(R+R_0)^{\alpha-n}\int_{\Omega_\mu\cap B(0,R)}h_{q_n}^{p_n-1}(y)dy\\
&&+\lambda \mu_{q_n} (1-\frac{R_0}{R})^{\alpha+1-n}\int_{\Omega_\mu}\frac {h_{q_n}^{p_n-1}(y)}{|y|^{n-\alpha-1}}dy-\lambda \mu_{q_n} (1-\frac{R_0}{R})^{\alpha+1-n}\int_{\Omega_\mu\cap B(0,R)}\frac {h_{q_n}^{p_n-1}(y)}{|y|^{n-\alpha-1}}dy\big)\\
&\le & \frac{1}{Q_{\lambda, {q_n}}(\Omega)}\big( (1+\frac{R_0}{R})^{\alpha-n}\int_{\Omega_\mu}\frac{h_{q_n}^{p_n-1}(y)}{|y|^{n-\alpha}}dy+\lambda \mu_{q_n} (1-\frac{R_0}{R})^{\alpha+1-n}\int_{\Omega_\mu}\frac {h_{q_n}^{p_n-1}(y)}{|y|^{n-\alpha-1}}dy\big)\\
&&+(R+R_0)^{\alpha-n}\frac{1}{Q_{\lambda, {q_n}}(\Omega)}\int_{\Omega_\mu}h_{q_n}^{p_n-1}(y)dy-\lambda \mu_{q_n} (R-R_0)^{\alpha+1-n}\frac{1}{Q_{\lambda, {q_n}}(\Omega)}\int_{\Omega_\mu}h_{q_n}^{p_n-1}(y)dy\\
&\le & 1+\epsilon.
\end{eqnarray*}
That is, $h_{q_n}(z)\to 1, \ z\in \Omega_\mu\cap B(0,R_0)$ uniformly as $q_n \to (q_\alpha)^-$. Then for $R_0>0$ large, since
$\int_{\Omega_\mu\cap B(0, R_0) }h_{q_n}^{p_n-1}(y)dy\le \int_{\Omega_\mu}h_{q_n}^{p_n-1}(y)dy$  and $\Omega_\mu$ goes to either $\Bbb{R}_T^n:= \{(z_1, z_2, \cdots, z_n) \ | \ z_n >T\ge 0\}$ or $\Bbb{R}^n,$  we obtain a contradiction to \eqref{sub-equ5-2}.

By \eqref{sub-equ4}, we have
\begin{eqnarray}\label{sub-equ7-1}
&&\lim_{|z|\to\infty}\frac{1}{|z|^{\alpha-n}}\int_{\Omega_\mu}\frac{h_q^{p-1}(y)}{|z-y|^{n-\alpha}}dy\nonumber\\
&\ge&\lim_{|z|\to\infty} Q_{\lambda, q}(\Omega) \frac{h_q(z)}{|z|^{\alpha-n}}
=\lim_{|z|\to\infty}\frac{\int_{\Omega_\mu}\frac{h_q^{p-1}(y)}{|z-y|^{n-\alpha}}(1+ \lambda \mu_q |z-y|)dy}{|z|^{\alpha-n}}\nonumber\\
&\ge&(1-|\lambda|d(\Omega))\lim_{|z|\to\infty}\frac{\int_{\Omega_\mu}\frac{h_q^{p-1}(y)}{|z-y|^{n-\alpha}}dy}{|z|^{\alpha-n}}.
\end{eqnarray}
Since $\frac{1}{|z|^{\alpha-n}}\frac{h_q^{p-1}(y)}{|z-y|^{n-\alpha}}\le 2^{\alpha-n}h_q^{p-1}(y)(1+|y|^{\alpha-n})$ as $|z|\to \infty$,  and $\int_{\Omega_\mu}h_q^{p-1}(y)(1+|y|^{\alpha-n})dy\le C$
by \eqref{sub-equ5-1} and \eqref{sub-equ5-2}, we then have
\begin{equation}\label{sub-equ7-2}
\lim_{|z|\to\infty}\frac{1}{|z|^{\alpha-n}}\int_{\Omega_\mu}\frac{h_q^{p-1}(y)}{|z-y|^{n-\alpha}}dy =\int_{\Omega_\mu}h_q^{p-1}(y)dy.
\end{equation}
Hence by \eqref{sub-equ7-1}, \eqref{sub-equ7-2}, \eqref{sub-equ5-2} and \eqref{sub-equ6}  we obtain the claim \eqref{sub-equ7}.  Hereby we complete the proof of Lemma \ref{lem3-3}.
\end{proof}


\noindent{\bf Proof of Proposition \ref{sub-existence-2}}. \   Let $f_q>0$ being  solutions to \eqref{sub-equ2} for $q\in (0, q_\alpha)$, which are also the minimal energy functions to energy $Q_{\lambda, q}$. Then by Lemma \ref{lem3-3}, we know that  $\{f_q\}$ are uniformly bounded above and bounded below by a positive constant. Thus they are  equicontinuous due to equation  \eqref{sub-equ2}.  It follows that $f_q \to f_*$ as $q \to (q_\alpha)^-$ in $C(\overline \Omega),$ and $f_*$ is the energy minimizer for $Q_\lambda$.
 \hfill$\Box$

\medskip

\noindent{\bf Completion of the Proof of Theorem \ref{main}}.  Lemma \ref{Lm-inequ} and Proposition \ref{sub-existence-2} implies the existence of a positive solution  $f \in L^{p_\alpha}(\Omega)\cap C(\overline\Omega)$ to the equation \eqref{HB}  for $q=\frac{2n}{n+\alpha}$,  $\lambda\in (-\frac{1}{d(\Omega)}, 0)$. It is also easy to see that $f \in C^1(\overline\Omega)$. \hfill$\Box$

\medskip

 \vskip 1cm
\noindent {\bf Acknowledgements}\\
\noindent
We  dedicate this paper to Professor Ha\"im Brezis to celebrate his seventy five birthday. We thank him for his great influence on us in the study of elliptic equations through his lectures and numerous papers, in particular, the paper with Louis Nirenberg \cite{BN1983}.
The project is partially  supported by  the
National Natural Science Foundation of China (Grant No. 11571268) and the Fundamental Research Funds for the Central Universities (Grant No. GK201802015) and Simmons Collaboration (Grant No. 280487).

\small

\end{document}